\newcommand{\dd}{\mathrm{d}}
\newcommand{\RR}{\mathbb{R}}
\newcommand{\HH}{\mathbb{H}}
\newcommand{\cO}{\mathcal{O}}
\newcommand{\cC}{\mathcal{C}}
\newcommand{\cZ}{\mathcal{Z}}
\newcommand{\tr}{\mathrm{tr}}
\newcommand{\even}{\mathrm{even}}
\newcommand{\End}{\mathrm{End}}
\newcommand{\kk}{\mathfrak{k}}
\newcommand{\Id}{\mathrm{Id}}
\newcommand{\grad}{\mathrm{grad}}
\newcommand{\cCloc}{\cC^\mathrm{loc}}
\def\Scal{\mathrm{Scal}}
\def\R{\mathrm{R}}
\def\Rs{\mathcal{R}}
\def\Spin{\mathrm{Spin}}
\def\ddt{\frac{\partial}{\partial t}}
\def\Span{\mathrm{Span}}
\def\SS{\mathbb S}
\def\Cl{\mathrm{Cl}}
\def\Hess{\mathrm{Hess \,}}
\newtheorem{theorem}{Theorem}[section]
\newtheorem{prop}[theorem]{Proposition}
\newtheorem{lemma}[theorem]{Lemma}
\newtheorem{definition}[theorem]{Definition}
\theoremstyle{definition}
\theoremstyle{remark}
\newtheorem{rem}[theorem]{\bf Remark}
\begin{document}

\title{Cauchy spinors on $3$-manifolds}
\date{\today}
\author[B. Flamencourt]{Brice Flamencourt}
\address[B. Flamencourt]{Universit\'e Paris-Saclay, CNRS, Laboratoire de math\'ematiques d'Orsay, 91405, Orsay, France}
\email{brice.flamencourt@universite-paris-saclay.fr}

\author[S. Moroianu]{Sergiu Moroianu}
\address[S. Moroianu]{Institutul de Matematic\u a al Academiei Rom\^ane\\P.O.\ Box 1--764\\
RO-014700 Bucharest, Romania}
\email{moroianu@alum.mit.edu}
\begin{abstract}
Let $\mathcal{Z}$ be a  spin $4$-manifold carrying a parallel spinor and $M\hookrightarrow \cZ$ a hypersurface. The second fundamental form of the embedding induces a flat metric connection on $TM$. Such flat connections satisfy a non-elliptic, non-linear equation in terms of a symmetric $2$-tensor on $M$. When $M$ is compact and has positive scalar curvature, the linearized equation has finite dimensional kernel. Four families of solutions are known on the round $3$-sphere $\SS^3$. We study the linearized equation in the vicinity of these solutions and we construct as a byproduct an incomplete hyperk\"ahler metric on $\SS^3\times \mathbb{R}$ closely related to the Euclidean Taub-NUT metric on $\RR^4$. On $\SS^3$ there do not exist other solutions which either are constant in a left (or right) invariant frame, have three distinct constant eigenvalues, or are invariant in the direction of a left (or right)-invariant eigenvector. We deduce from this last result an extension of Liebmann's sphere rigidity theorem.
\end{abstract}

\maketitle

\section{Introduction}

\subsection*{The Cauchy problem for parallel spinors}
If $(M,g)$ is an oriented hypersurface in a spin manifold $(\mathcal Z, g_\mathcal Z)$ and $\Psi$ is a parallel spinor on $\mathcal Z$, then the restriction 
$\psi := \Psi \vert_M$ is a spinor on $M$ satisfying the initial condition
\begin{align} \label{introgks}
\nabla_X \psi = - \frac{A(X)}{2} \cdot \psi,&& (\forall) X \in T M,
\end{align}
where $A$ is the second fundamental form of $M$ (see \cite[Proposition 1.4.1]{Gin}). 

Conversely, given a spinor $\psi$ on $M$ satisfying the constraint \eqref{introgks} for some symmetric endomorphism $A$, is it always possible to embed $M$ as a hypersurface in an ambient manifold $\mathcal Z$ with second fundamental form $A$, and such that $\psi$ is the restriction to $M$ of a parallel spinor $\Psi$ on $\mathcal Z$? In other words, is \eqref{introgks} the only constraint for the existence of $\mathcal{Z}$ and $\Psi$? This is the so-called Cauchy problem for parallel spinors studied in \cite{A2M}. The answer is positive when all the objects involved are real analytic, and negative in general in the smooth setting for $\dim(M)\geq 3$. 

A spinor and symmetric $2$-tensor on $M$ satisfying Equation \eqref{introgks} will be called below a \emph{Cauchy spinor}, respectively a \emph{Cauchy endomorphism}. Since \eqref{introgks} includes as a particular case the Killing spinor equation (i.e., when $A$ is a constant multiple of the identity), Cauchy spinors were sometimes called \emph{generalized Killing spinors} e.g.\ in \cite{MS1,MS2,MS3}. Different generalizations of the notion of Killing spinors appear however in the literature, e.g.\ in the papers \cite{HM} or \cite{FK2000,FK2001}. We believe that the current name should be more appropriate, as it describes more accurately the property of being the restriction of a parallel spinor to a hypersurface. 

The classification problem for Cauchy spinors and endomorphisms on a given manifold $M$ requires us to describe all pairs $(\psi,A)$ verifying \eqref{introgks}. In dimension $3$, partial results in this direction were found in \cite{MS2}, where Cauchy spinors are characterized in terms of a triple of divergence-free vector fields on $M$. The same authors investigated in \cite{MS1,MS3} the case of the sphere $\SS^3$, classifying all Cauchy endomorphisms having at most $2$ distinct eigenvalues. This example illustrates the little understanding we have of Cauchy spinors in dimension $3$, as we are unable to classify them even on the round sphere. Note that a complete description can be given in several other dimensions \cite{MS3}.

\subsection*{Cauchy spinors on 3-manifolds and flat connections} 
Spin geometry in dimension $3$ is special because the Hodge $*$ operator allows an exceptional identification between $1$- and $2$-forms, and moreover the real spinor bundle carries a quaternionic structure. Using these algebraic structures, on simply connected $3$-manifolds (which by the Poincar\'e conjecture must be diffeomorphic to the sphere) we can restate the classification problem for Cauchy spinors without mentioning spinors at all! Indeed, Equation \eqref{introgks} implies a constraint for the symmetric endomorphism field $A$:
\begin{align} \label{introendcond}
0 = \R (X,Y) + *d^\nabla A (X, Y) + A(X) \wedge A(Y),&&(\forall)X,Y \in T M.
\end{align}
Here $\R$ is the Riemann curvature tensor and $d^\nabla$ is the exterior covariant derivative on $M$ mapping sections of $\Lambda^1 M \otimes T M$ to sections of $\Lambda^2 M \otimes T M$. The set of symmetric endomorphisms satisfying \eqref{introendcond} is denoted $\cCloc_M$.
If $M$ is simply connected, every solution of Equation \eqref{introendcond} also satisfies \eqref{introgks} for some Cauchy spinor, unique up to right multiplication by a quaternion. Our strategy below is to exploit Equation \eqref{introendcond} in order to obtain new results on the Cauchy problem for parallel spinors, and also on the classification problem for Cauchy spinors.

Equation \eqref{introendcond} amounts to the flatness of the modified metric connection $\nabla^A=\nabla+*A$ on $TM$. Even in the compact and simply connected case, the structure of the set of flat connections $\cCloc_M$ remains elusive, in part because Equation \eqref{introendcond} is non-linear and not elliptic. For this reason, we first study the linearization of \eqref{introendcond}. We show that if the scalar curvature of $M$ is positive, the space of infinitesimal deformations, defined as the space of symmetric endomorphism fields solution to the linearization of  \eqref{introendcond}, is finite-dimensional (Theorem~\ref{deformation}). This can be interpreted as a finiteness result for the dimension of the ``tangent space" of $\cCloc_M$, with the caveat that this set of flat connections is a priori not a smooth manifold. The hypothesis on the sign of the scalar curvature is necessary: we exhibit flat compact $3$-manifolds for which the dimension of the space of deformations is infinite.

\subsection*{Cauchy endomorphisms on the round three-sphere}
 We view $\SS^3$ as the Lie group of unit-length quaternions. Four examples of symmetric endomorphisms $A$ fulfilling the flatness condition \eqref{introendcond} on $\SS^3$ are known from \cite[Example 3.2]{MS2}: $\pm \mathrm{Id}$, and the endomorphism fields constant in a left (resp.\ right)-invariant orthonormal frame, with eigenvalues $1, -3, -3$ (respectively $-1, 3, 3$).
It was already shown in \cite{MS3} that there are no deformations around $A = \pm \mathrm{Id}$. We prove that the space of infinitesimal deformations around the other two examples has dimension $2$, and corresponds to the Lie derivative of $A$ in the direction of a left (or right-) invariant vector field from $\textrm{ker} (A \pm 3 \mathrm{Id})$. In particular, there are no other deformations of the above solutions. If $\cCloc_{\SS^3}$ were a smooth manifold, it would therefore necessarily have at least $4$ connected components.

The examples of endomorphisms on $\SS^3$ given above are analytic, so by \cite{A2M} they can be realized as second fundamental forms of the three-sphere embedded as a hypersurface in a generalized cylinder $\mathcal Z :=(-\epsilon,\epsilon) \times \SS^3$ carrying a parallel spinor. The cases $A = \pm \mathrm{Id}$ both induce the standard embedding of $\SS^3$ into $\RR^4$. We calculate in Section~\ref{DeformationsOnS3} an explicit expression of this metric in the other two cases, finding an extension of the family of Euclidean Taub-NUT metrics with a negative parameter. This computation solves the long-time Cauchy problem on the three-sphere for the four known examples of Cauchy spinors on $\SS^3$.

\subsection*{Classification results}
In the final section we prove three classification results for symmetric endomorphisms solving Equation~\eqref{introendcond} on $\SS^3$. The known examples of Cauchy endomorphisms have constant matrices in a left or right-invariant orthonormal basis of the tangent bundle of the Lie group $\SS^3$. It is natural to ask if those are the only symmetric endomorphisms solutions to \eqref{introendcond} with this property. We prove 
in Proposition \ref{constantA} that this is indeed the case. Moreover, since all the solutions of \eqref{introendcond} with at most two eigenvalues are known \cite[Theorem 4.10]{MS3}, we investigate the case where $A$ has three distinct constant eigenvalues. We show that there is no solution in this case, using a characterization of the Hopf fields on $\SS^3$ (Proposition~\ref{classification2}).
Finally, we show that the solutions which are constant only in the direction of a left or right-invariant eigenvector $\xi$ of $A$, i.e., such that $\mathcal L_\xi A = 0$, must also be constant in a left (respectively right) invariant orthonormal frame, so they fall in the class of previously known examples (Proposition~\ref{classification1}). This result turns out to imply (a slight extension of) the Liebmann rigidity theorem \cite{L99}.

\subsection*{Related results}
Cauchy spinors in $3+1$ Lorentzian signature were recently classified by Murcia and Shahbazi \cite{MS20}, \cite{MS21}, thanks to the fact that one of the elements in the global coframe defined by the Cauchy spinor on the hypersurface is privileged, and determines an integrable foliation. On the other hand, unlike in the Riemannian case, the Lorentzian initial value problem for parallel spinors is of hyperbolic nature and has solutions in the smooth category. Thus the two problems are in fact rather different, despite some formal similarities.

\subsubsection*{Acknowledgements} We are indebted to Andrei Moroianu for proposing the problem and for many enlightening discussions. In particular, he suggested the name ``Cauchy spinor" to replace the more traditional - but somewhat confusing - ``generalized Killing spinor". 
The second named author was partially supported from the UEFISCDI grant PN-III-P4-ID-PCE-2020-0794 ``Spectral Methods in Hyperbolic Geometry". He would like to acknowledge the hospitality of the University of Lorraine at Metz, where part of this paper was written.

\section{Preliminaries}

\subsection{Spinors in dimension $3$}

The real Clifford algebra $\Cl_2$ is canonically isomorphic to the quaternion algebra $\HH$ by the map sending $e_1$ to $i$ and $e_2$ to $j$, where $\{e_1,e_2\}$ is the standard basis of $\RR^2$. It follows that the even Clifford algebra $\Cl_3^\even$ is also isomorphic to 
$\HH$ by the unique algebra map sending $e_1e_3$ to $i$, $e_2e_3$ to $j$, and hence $e_2e_3$ to $k$.

Multiplication by the central element $P:=\frac{1-e_1e_2e_3}{2}$ is a projector in $\Cl_3$. Let $\Sigma_3$ be the image of this projector. Then $P$ maps $\Cl_3^\even$ isomorphically onto $\Sigma_3$, thus identifying $\Sigma_3$ to $\HH$. Since $P$ is central, $\Cl_3^\even$ acts on $\Sigma_3$ by left multiplication, and this representation commutes with the right action of $\HH\simeq \Sigma^3$ on itself. The restriction of this quaternionic representation to the spin group $\Spin_3$ is the spinor representation. By construction, the spinor representation is thus the restriction of a $\Cl_3$ representation under which the volume element $e_1e_2e_3$ acts as minus the identity. 

The spinor representation is orthogonal with respect to the natural scalar product on $\Sigma_3$. The right multiplication by quaternions is also compatible with the scalar product, in the sense that $\langle \psi a, \phi a \rangle = \vert a \vert^2 \langle \psi , \phi \rangle$ for all $a \in \HH$ and $\psi, \phi \in \Sigma_3$.

Recall now that every oriented $3$-manifold is parallelizable, hence spin. Let $(M,g)$ be an oriented Riemannian 3-manifold with a fixed spin structure. The real spinor bundle $\Sigma M$ over $M$ is the vector bundle associated to the spin bundle and the $\Spin_3$ spinor representation $\Sigma_3$. It is endowed with a natural inner product $\langle \cdot, \cdot \rangle$ and a $\Cl(TM)$-action such that the Clifford product by vectors on $\Sigma M$ is skew-symmetric. The right action of $\HH$ on $\Sigma_3$ induces a right $\HH$ action on $\Sigma M$ commuting with Clifford multiplication by tangent vectors, and satisfying $\langle \psi a, \phi a \rangle = \vert a \vert^2 \langle \psi , \phi \rangle$ for all $a \in \HH$ and $\psi, \phi \in \Sigma M$.

By construction, Clifford multiplication with the volume form acts as $- \mathrm{Id}$ on $\Sigma M$. This choice implies that for a $2$-form $\omega \in \Lambda^2(M)$ and a spinor $\psi \in \Sigma M$ we have
\begin{equation} \label{2form}
\omega \cdot \psi = * \omega \cdot \psi,
\end{equation}
where $*$ denotes the Hodge star-operator.

\subsection{Cauchy spinors and endomorphisms}
Let $E\to M$ be a vector bundle endowed with a connection $\nabla$.
The exterior differential twisted by $\nabla$ is defined on $E$-valued $p$-forms using Einstein's summation convention:
\begin{equation*}
d^\nabla (\omega \otimes V) := d \omega \otimes V + (-1)^p \omega \wedge X^j \otimes \nabla_{X_j} V,
\end{equation*}
where $(X_j)_{1 \leq j \leq n}$ is any local orthonormal frame on $M$. 
For an endomorphism field $A\in\End(TM)$, the above formula becomes
\begin{align} \label{extcovder}
d^\nabla A (X,Y) := (\nabla_X A) Y - (\nabla_Y A) X,&& (\forall) X,Y\in TM.
\end{align}
The divergence operator $\delta^\nabla$ is the formal adjoint of $d^\nabla$ with respect to the $L^2$-inner product on vector-valued forms.

Let now $\nabla$ be the Levi-Civita covariant derivative on an oriented $3$-manifold $M$, and $\R=d^\nabla\circ\nabla$ its Riemann curvature tensor.
The Levi-Civita covariant derivative on the spinor bundle $\Sigma M$ is still denoted by $\nabla$, and its curvature tensor is written $\Rs$. 

\begin{definition}
Let $(M,g)$ be a spin $3$-manifold.
A non-zero section $\psi \in \Gamma (\Sigma M)$ is a \emph{Cauchy spinor} if there exists a symmetric endomorphism field $A\in\Gamma(T^*M\times TM)$ such that the pair $(\psi,A)$ satisfies Equation \eqref{introgks}. In this situation, $A$ is called a \emph{Cauchy endomorphism}.

We denote by $\cC_M$ the set of all Cauchy endomorphisms on $M$, and by $\cCloc_M$ the set of all symmetric endomorphisms on $M$ satisfying Equation \eqref{introendcond}.
\end{definition}
In other words, an endomorphism field $A$ on $M$ is a Cauchy endomorphism if it is symmetric and there exists some non-zero spinor 
$\psi\in\Gamma(\Sigma M)$ satisfying $\nabla_X\psi=-\frac{1}{2}A(X)\cdot\psi$ for all vectors $X\in TM$.
We stress that the symmetry assumption on $A$ is crucial here, since in dimension $3$ every spinor of constant length determines uniquely some general endomorphism field $A$ so that Equation \eqref{introgks} holds. 

The sets $\cC_M$ and $\cCloc_M$ will form our main object of study in this paper.

\subsection{Parallel spinors in dimension $4$}

Let us review below the main results of \cite{A2M} about parallel spinors in dimension $4$, Ricci-flat metrics and Cauchy spinors.

Let $(M,g)$ be a hypersurface in a $4$-dimensional manifold $(\cZ,g^{\mathcal Z})$. 
If $\mathcal{Z}$ is Ricci-flat, the second fundamental form $W$ of the embedding $M\hookrightarrow \mathcal{Z}$ satisfies the contracted Codazzi and Gauss equations:
\begin{align}\label{cgc}
\Scal^M = \tr(A)^2-\tr(A^2),&&\delta^{\nabla}A+d\tr(A)=0.
\end{align}
Conversely, if $(M,g^M)$ is real analytic and the constraints \eqref{cgc} hold on $M$ for some analytic symmetric endomorphism $A$, then there exists a Ricci-flat real-analytic ambient manifold $(\mathcal Z,g^{\mathcal Z})$ in which $M$ imbeds isometrically with second fundamental form $A$.

Upon replacing $\cZ$ by a collar neighborhood of $M$, we can assume that $\cZ$ is also parallelizable. Fix a spin structure on $\cZ$. The restriction of each of the spinor bundles $\Sigma^\pm \cZ$ to $M$ is isomorphic to $\Sigma M$.
If $\mathcal{Z}$ admits a nonzero parallel spinor $\Psi$, then $g^{\mathcal Z}$ must be Ricci flat. Up to reversing the orientation on $\cZ$ we can assume that the chiral component $\Psi^+$ is nonzero, implying that the metric is self-dual (if $\Psi^+$ and $\Psi^-$ are both nonzero then $\cZ$ is flat). Moreover, the restriction of $\Psi^+$ to $M$ is a Cauchy spinor, and the second fundamental form of $M\hookrightarrow \cZ$ is a Cauchy endomorphism. 

Conversely, if there exists a Cauchy spinor $\psi$ and a Cauchy endomorphism $A$ on $M$ satisfying \eqref{introgks}, then the identities \eqref{cgc} are satisfied (an alternate derivation of these identities can be found in \cite[Lemma 3.1, Equation (12)]{MS1}). If moreover $M$, $g^M$, $A$ and $\psi$ are real-analytic, then there exists a Ricci-flat, self-dual $4$-manifold $(\cZ,g^{\mathcal Z})$ into which $(M,g^M)$ embeds isometrically with second fundamental form $A$, and locally there exists a parallel spinor of positive chirality on $\cZ$ extending $\psi$.

\subsection{The modified metric connection $\nabla^A$}
Throughout the paper we identify $1$-forms and vectors on $M$ using the metric $g$. 
Given any $A \in \Gamma( \Lambda^1 M \otimes T M)$, we define $\overline A \in \Gamma(\Lambda^1 M \otimes \Lambda^2 M)$ by 
$\overline A (X) := *(A (X))$ for all $X \in T M$. We can view the $2$-form $\overline A(X)$ as a skew-symmetric endomorphism of $T M$ in the usual way: for $Y \in T M$, $\overline A (X) (Y)$ is the unique vector $Z$ satisfying $g(Z,W) = \overline A (X) (Y,W)$ for all $W \in T M$. 

We introduce the connection on $TM$
\begin{align} \label{modifiedconnection}
\nabla^A_X Y := \nabla_X Y + \overline A (X) (Y),&&X,Y \in T M,
\end{align}
and we denote by $\R^A$ its curvature tensor. Since by construction $\overline A (X)$ is skew symmetric, $\nabla^A$ is compatible with the Riemannian metric and hence induces a connection, still denoted $\nabla^A$, on the spinor bundle $\Sigma M$ 
by pull-back from the orthonormal frame bundle:
\begin{align*}
\nabla^A_X \psi = \nabla_X \psi + \frac{A(X)}{2} \cdot \psi,&&(\forall) X \in TM,\psi \in \Sigma M.
\end{align*}
We denote by $\Rs^A$ the curvature of $\nabla^A$ on spinors.

The problem of finding solutions to \eqref{introgks} can be reduced to solving an equation involving the endomorphism field $A$ alone, at least when $M$ is simply-connected.

\begin{prop} \label{endCond}
Let $(M,g)$ be a Riemannian $3$-manifold and $A \in \Gamma( \Lambda^1 M \otimes T M)$ a symmetric endomorphism field. The following conditions are equivalent:
\begin{enumerate}
\item Locally on $M$ there exist nonzero Cauchy spinors with respect to $A$;
\item $\Rs^A=0$;
\item $\R^A=0$;
\item The symmetric endomorphism $A$ satisfies the equation \eqref{introendcond}, i.e., $A\in\cCloc_M$.
\end{enumerate}
When $M$ is simply connected, the first condition is equivalent to the global existence of Cauchy spinors, hence $\cCloc_M=\cC_M$.
\end{prop}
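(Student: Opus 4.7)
The plan is to prove the cyclic equivalence $(1)\Leftrightarrow(2)\Leftrightarrow(3)\Leftrightarrow(4)$ and then derive the simply-connected refinement. By definition of $\nabla^A$ on the spinor bundle, the equation $\nabla_X\psi = -\tfrac12 A(X)\cdot\psi$ is equivalent to $\nabla^A\psi = 0$, so Cauchy spinors for $A$ are exactly the $\nabla^A$-parallel sections of $\Sigma M$, and condition (1) is the statement that $(\Sigma M,\nabla^A)$ admits local nonzero parallel sections.

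For $(1)\Leftrightarrow(2)$, the direction $(2)\Rightarrow(1)$ is the standard fact that flat connections on contractible neighborhoods admit a full space of parallel sections. For $(1)\Rightarrow(2)$ I would use the dimension-$3$ input that $\Spin_3\cong\mathrm{Sp}(1)$ acts on $\Sigma_3\cong\HH$ by left quaternionic multiplication, hence \emph{freely} on $\HH\setminus\{0\}$; infinitesimally, no nonzero element of $\mathfrak{spin}_3$ annihilates a nonzero spinor. Since $\bar A(X)$ is skew-symmetric, $\nabla^A$ is metric on $\Sigma M$, so a locally parallel nonzero spinor has constant positive norm and vanishes nowhere on its domain; thus $\Rs^A(X,Y)\psi = 0$ forces $\Rs^A(X,Y)=0$ there, and (2) follows globally by varying $p\in M$. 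The equivalence $(2)\Leftrightarrow(3)$ is formal and dimension-independent: $\nabla^A$ on $\Sigma M$ is lifted from $\nabla^A$ on the orthonormal frame bundle along the double cover $\Spin\to\mathrm{SO}$, and the induced Lie algebra isomorphism $\mathfrak{so}(TM)\xrightarrow{\sim}\mathfrak{spin}(TM)\subset\End(\Sigma M)$ identifies $\R^A$ with $\Rs^A$, so one vanishes iff the other does.

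The main computational step is $(3)\Leftrightarrow(4)$: I would expand $\R^A(X,Y) = [\nabla^A_X,\nabla^A_Y]-\nabla^A_{[X,Y]}$ after substituting $\nabla^A=\nabla+\bar A$. The zeroth-order piece is $\R(X,Y)$; the torsion-freeness of $\nabla$ and the commutativity of $*$ with $\nabla$ reassemble the linear terms into $d^\nabla\bar A(X,Y) = *d^\nabla A(X,Y)$; the quadratic commutator $[\bar A(X),\bar A(Y)]$ is evaluated using the dimension-$3$ cross-product realization $\bar A(X)(Y)=A(X)\times Y$ together with the BAC--CAB identity, producing the skew endomorphism associated with the $2$-form $A(X)\wedge A(Y)$. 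Summing yields $\R^A(X,Y) = \R(X,Y) + *d^\nabla A(X,Y) + A(X)\wedge A(Y)$, whose vanishing is exactly \eqref{introendcond}. For the final clause, if $M$ is simply connected and $A\in\cCloc_M$ then $\Rs^A=0$, and a flat bundle on a simply connected base is trivialized by parallel sections; choosing one with nonzero value at a point and invoking metric compatibility of $\nabla^A$ upgrades it to a globally nonvanishing Cauchy spinor, giving $\cCloc_M\subset\cC_M$ (the reverse inclusion is immediate). The step I expect to require the most care is the sign bookkeeping in $(3)\Leftrightarrow(4)$: the identifications between $1$-forms, vectors, $2$-forms and skew endomorphisms (through the metric, the Hodge star, and the relation $\omega\cdot = *\omega\cdot$ from \eqref{2form}) all have to line up so that the commutator contributes $+A(X)\wedge A(Y)$ rather than its opposite.
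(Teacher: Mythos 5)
Your proposal is correct and follows essentially the same approach as the paper: derive $\Rs^A=0$ from the existence of a local nonzero $\nabla^A$-parallel spinor, pass to $\R^A=0$ via the faithfulness of the spinor representation of $\mathfrak{spin}_3$, and identify $\R^A=\R+*d^\nabla A + A\wedge A$ by a direct expansion of $[\nabla^A_X,\nabla^A_Y]-\nabla^A_{[X,Y]}$. The one small variation is in $(1)\Rightarrow(2)$: the paper promotes a single parallel $\psi$ to a full parallel frame $(\psi,\psi i,\psi j,\psi k)$ via the right $\HH$-action commuting with $\nabla^A$, whereas you argue directly that, since $\nabla^A$ lifts a connection on the frame bundle, $\Rs^A(X,Y)$ lies in the image of $\mathfrak{spin}_3\cong\mathrm{Im}\,\HH$, and no nonzero such element can annihilate the nowhere-vanishing spinor $\psi$ — both are instances of $\Sigma_3\cong\HH$ being a division algebra, and your version dovetails with the paper's own $(2)\Leftrightarrow(3)$ argument via injectivity of Clifford multiplication by $2$-forms.
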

\begin{proof}
If there exists a locally defined, nonzero spinor $\psi$ satisfying \eqref{introgks}, then the four mutually orthogonal spinors $(\psi, \psi i, \psi j, \psi k)$, defined with the help of the quaternionic structure on $\Sigma M$, are parallel for the covariant derivative $\nabla^A$. This implies that the curvature endomorphism $\Rs^A$ of $(\Sigma M, \nabla^A)$ vanishes.
Conversely, if $\Rs^A = 0$, locally there exist non-zero spinors parallel with respect to the connection $\nabla^A$, i.e., Cauchy spinors. Moreover, if $M$ is simply connected then such spinors exist globally on $M$.

Since the connection $\nabla^A$ on $\Sigma M$ is induced from the connection with the same name on $TM$ and the spinor representation $\Sigma M$, it is well known that 
$\Rs^A(X,Y)=\frac{1}{2}\R^A(X,Y)\cdot$, where $\cdot$ denotes Clifford multiplication (see e.g.\ \cite[Theorem 2.7]{BH3M}). In dimension $3$ the map associating to a $2$-form its action by Clifford multiplication on spinors is injective, thus $\Rs^A(X,Y)=0$ if and only if $\R^A(X,Y)=0$.

For all $X,Y \in T M$, we compute from the definition of $\nabla^A$ its curvature:
\begin{equation*}
\R^A (X,Y) = \R (X,Y) + d^\nabla \overline{A}(X,Y) + [\overline A(X), \overline A(Y)]
\end{equation*}
Since the Hodge $*$ operator is parallel, it commutes with $d^\nabla$, so $d^\nabla \overline{A}=*d^\nabla A$. Also, we check directly that
\[
\overline A(X) \circ \overline A(Y) - \overline A(Y) \circ \overline A(X) = A(X) \wedge A(Y).
\]
Hence $\R^A=\R+*d^\nabla A+A\wedge A$ as claimed.
\end{proof}

Proposition~\ref{endCond} shows that $\cC_M \subset \cCloc_M$, with equality when $M$ is simply connected.

\section{Deformation of endomorphism fields}

There is not much one can say about the structure of the sets $\cC_M$ or $\cCloc_M$, even in the simply connected case when they coincide. Let us introduce the following definition:
\begin{definition} \label{defordef}
The space of infinitesimal deformations of $\cCloc_M$ at $A \in \cCloc_M$ is
\begin{align*}
\lbrace \dot C (0) \vert \: (\exists) \varepsilon > 0,
(\exists) C \in C^\infty ((- \varepsilon, \varepsilon), \Gamma (\mathrm{Sym}^2(T M)), C(0) = A, \frac{\dd}{\dd t} \R^{C (t)} \vert_{t = 0} = 0 \rbrace.
\end{align*}
\end{definition}

In the case where $\cCloc_M$ is a differentiable manifold, its tangent space $T_A\cCloc_M$ is the space of tangent vectors at $t=0$ to smooth curves $C : (-\varepsilon, \varepsilon) \to \cCloc_M$ with $C(0) = A$. In general, since we do not know any differentiable structure on $\cCloc_M$, the above cone might even not be a vector space. The space of infinitesimal deformations always contains this formal tangent cone.

\begin{theorem} \label{deformation}
Let $(M,g)$ be a compact oriented Riemannian $3$-manifold with strictly positive scalar curvature and let $A \in \cCloc_M$. Then the space of infinitesimal deformations of $A$ is finite-dimensional.
\end{theorem}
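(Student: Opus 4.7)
The plan is to linearize the flatness equation, translate the resulting PDE into a spinor equation via Proposition~\ref{endCond}, and deduce finite-dimensionality from a Weitzenb\"ock-type estimate coupling the Lichnerowicz identity on spinors with the contracted Gauss equation~\eqref{cgc}.

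\textbf{Linearization and spinorial reduction.} Differentiating the formula $\R^A = \R + *d^\nabla A + A \wedge A$ from the proof of Proposition~\ref{endCond} at $A$ in the direction of a symmetric $B$ yields the first-order linear equation
\[
L_A B(X,Y) := *d^\nabla B(X,Y) + A(X) \wedge B(Y) - A(Y) \wedge B(X) = 0.
\]
The principal symbol of $L_A$ at a nonzero covector $\xi$ has one-dimensional kernel $\Span(\xi^\sharp \otimes \xi^\sharp)$, so $L_A$ alone is not elliptic. By Proposition~\ref{endCond}, $\nabla^A$ is flat on $\Sigma M$, so locally (or on the universal cover) there exists a $\nabla^A$-parallel unit spinor $\psi$. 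A commutator computation, using the injectivity of Clifford multiplication by a nonzero vector on $\psi$, shows that $L_A B = 0$ if and only if the equation
\[
\nabla^A_X \phi = -\tfrac{1}{2} B(X) \cdot \psi
\]
admits a spinor solution $\phi$. Writing uniquely $\phi = c\,\psi + \xi \cdot \psi$ with $c \in \RR$ constant and $\xi \in \Gamma(TM)$, one sees that $B = -2\,\nabla^A \xi$, and the symmetry of $B$ translates to the symmetry of $\nabla^A \xi$ as a $(1,1)$-tensor. Modulo the finite-dimensional space of $\nabla^A$-parallel vector fields, the space of infinitesimal deformations embeds into $\cN := \{\xi \in \Gamma(TM) : \nabla^A \xi \text{ is symmetric}\}$.

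\textbf{Bochner estimate and main obstacle.} The condition defining $\cN$ can be rewritten as $d \xi^\flat = -F_A(\xi)$ for a bundle map $F_A$ built algebraically from $A$ via the $3$D cross product, so, paired with the unrestricted divergence $\delta\xi^\flat$, $\xi^\flat$ is controlled by the elliptic Hodge--Dirac operator $d + \delta$ up to a zeroth-order perturbation. To close off the would-be infinite-dimensional kernel (in particular to rule out gradients $\nabla f$) one applies the Lichnerowicz formula $D^2 = \nabla^*\nabla + \Scal/4$ to $\phi$, together with the direct computation $D\phi = \tfrac{1}{2}\tr(B)\psi + \tfrac{1}{2}\tr(A)\phi$ and the Gauss identity $\Scal = \tr(A)^2 - \tr(A^2)$ from~\eqref{cgc}, aiming at a coercive $L^2$-estimate for $\xi$ after which Rellich compactness gives the result. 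The main obstacle is precisely to extract the positivity of $\Scal$: a direct $L^2$-pairing of Lichnerowicz against $\phi$ causes the $\Scal\,|\phi|^2$ contribution to cancel with the $\tr(A)^2\,|\phi|^2$ term via the Gauss identity, so the positivity must be recovered from a finer combination---typically by isolating the trace-free part of $B$ (which under Gauss feels $\tr(A^2) > 0$), or by pairing against a twisted spinorial quantity built from $\nabla^A$. That such a subtle mechanism is unavoidable is confirmed by the infinite-dimensional deformation spaces on flat compact $3$-manifolds mentioned in the introduction. A secondary issue is globalization when $M$ is not simply connected, handled by passing to the universal cover and arguing equivariantly.
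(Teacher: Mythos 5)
Your linearization is correct and your reduction of the problem to the vector field $\xi$ with $\nabla^A\xi$ symmetric is essentially the same as the paper's (the paper does it globally via Hodge theory for the flat complex $(\Gamma(\Lambda^*M\otimes TM), d^{\nabla^A})$, writing $\dot A=\nabla^A X+B$ with $B$ harmonic; your local spinorial version expresses the same fact, though it does not by itself account for the global harmonic contribution, which the paper disposes of by noting that harmonic vector-valued $1$-forms form a finite-dimensional space). The symmetry condition $d\xi^\flat=-F_A(\xi)$ you arrive at is exactly the paper's equation $dX-*(X\,\mathrm{tr}A-AX)=0$ from Lemma~\ref{symcond}.

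The gap is in the final step, and you have in fact identified it yourself. The Lichnerowicz/Bochner route you propose does not close: pairing $D^2\phi$ against $\phi$ and using the contracted Gauss identity makes the $\Scal$-term cancel against $\mathrm{tr}(A)^2$, and you have no concrete mechanism to recover positivity from the trace-free part. What the paper does instead is much more elementary and avoids any second-order Bochner estimate. Observe that applying $d$ to $dX-*(X\,\mathrm{tr}A-AX)=0$ forces the additional scalar constraint $\delta(X\,\mathrm{tr}A-AX)=0$. One then forms the \emph{first-order} operator
\[
\Xi:\Lambda^1M\to\Lambda^2M\oplus\Lambda^0M,\qquad
\Xi(X)=\bigl(dX-*(X\,\mathrm{tr}A-AX),\ \delta(X\,\mathrm{tr}A-AX)\bigr),
\]
whose principal symbol at a covector $\alpha$ sends $X$ to $(\alpha\wedge X,\ \langle\alpha,(\mathrm{tr}A-A)X\rangle)$. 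The first component forces $X\parallel\alpha$, and then the second reduces to $\langle\alpha,(A-\mathrm{tr}A\,\mathrm{Id})\alpha\rangle=0$. The crucial algebraic fact, which is where $\Scal>0$ enters, is that the Gauss constraint $\lambda_1\lambda_2+\lambda_1\lambda_3+\lambda_2\lambda_3=\tfrac12\Scal>0$ implies $A-\mathrm{tr}(A)\,\mathrm{Id}$ is \emph{definite} (Lemma~\ref{algtech}), so the symbol is injective, $\Xi^*\Xi$ is elliptic, and its kernel is finite-dimensional. Your proposal is missing precisely this overdetermined-elliptic trick and the accompanying linear-algebra lemma; without it the argument does not conclude, as your own closing paragraph concedes.
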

\begin{proof}
Let $C \in C^\infty ((- \varepsilon, \varepsilon), \Gamma (\mathrm{Sym}^2(T M))$ be a smooth curve as in Definition~\ref{defordef}, with $C(0) = A$. We define $\dot A := \dot C (0)$. By differentiating with respect to $t$ in \eqref{introendcond} the condition $\frac{\dd}{\dd t} \R^{C (t)} \vert_{t = 0} = 0$ rewrites
\begin{equation} \label{eqder}
0 = *(\dd^{\nabla} \dot A) (X,Y) + \dot A(X) \wedge A(Y) + A(X) \wedge \dot A (Y).
\end{equation}
We recall some elementary identities about the Hodge star-operator in dimension $3$.
Let $X,Y \in T M$ and $\alpha \in \Lambda^2 M$. Then we easily check that
\begin{align}\label{HodgeFormulas}
X \lrcorner * Y &= - * (X \wedge Y) ,&
X \wedge * \alpha &= - * (X \lrcorner \alpha).
\end{align}

Using these identities, one has for all $X,Y \in T M$
\begin{align*}
\dot A (X) \wedge A(Y) + A(X) \wedge \dot A(Y) &= - *\overline A(Y) (\dot A (X)) + *\overline A(X) (\dot A (Y)).
\end{align*}
Consequently, Equation \eqref{eqder} rewrites
\begin{align*}
0 &= (\dd^{\nabla} \dot A) (X,Y) - \overline A(Y) (\dot A (X)) + \overline A(X) (\dot A (Y)) 
= (d^{\nabla^A} \dot A) (X,Y).
\end{align*}
We are hence led to the equation for the infinitesimal deformation of a flat connection:
\begin{equation}\label{fc}
d^{\nabla^A} \dot A = 0.
\end{equation}

According to Proposition~\ref{endCond}, the connection $\nabla^A$ is flat, meaning that $\R^A=d^{\nabla^A}\circ\nabla^A=0$ on $\Gamma(TM)$. But $d^{\nabla^A} \circ d^{\nabla^A}$ is given by the action of $\R^A$ also on $\Gamma(\Lambda^kM\otimes TM)$ for all $k
\geq 0$.
This implies that $d^{\nabla^A} \circ d^{\nabla^A} = 0$ on $\Gamma(\Lambda^*M\otimes M)$. The differential operators 
\[d^{\nabla^A}:\Gamma(\Lambda^*M\otimes TM)\to \Gamma(\Lambda^{*+1}M\otimes TM)\] 
form therefore an elliptic complex.
By Hodge theory, Equation \eqref{fc} implies that there exists a vector field $X_{\dot A} \in \Gamma(T M)$ and a $d^{\nabla^A}$-harmonic vector-valued $1$-form $B_{\dot A} \in \Gamma(\Lambda^1 M \otimes TM)$ (i.e., $\delta^{\nabla^A}B_{\dot A} = 0$ and $d^{\nabla^A} B_{\dot A} = 0$) such that $\dot A = \nabla^A X_{\dot A} + B_{\dot A}$. Notice that this equation still involves the symmetric endomorphism $A$ which defines the flat connection 
$\nabla^A$. 

The symmetry condition on $\dot{A}$ can be rewritten as an equation on $X_{\dot A}$, $B_{\dot A}$ and $A$:

\begin{lemma} \label{symcond}
Let $X \in \Gamma(T M)$ and $B \in \Gamma(\Lambda^1 M \otimes T M)$. The endomorphism $d^{\nabla^A} X + B$ is symmetric if and only if $X$, viewed as a 1-form, satisfies $dX - *(X\mathrm{tr}(A)-AX) + X_k \wedge B (X_k) = 0$, where $(X_1, X_2, X_3)$ is any orthonormal basis.
\end{lemma}
\begin{proof}
The endomorphism $d^{\nabla^A} X + B$ is symmetric if and only if its skew-symmetric part is zero. Using the identities \eqref{HodgeFormulas} we compute
\begin{align*}
0 &= X_k \wedge \nabla^A_{X_k} X + X_k \wedge B(X_k) \\
&= X_k \wedge (\nabla_{X_k} X + \overline A (X_k) (X)) + X_k \wedge B(X_k) \\
&= X_k \wedge (\nabla_{X_k} X + X \lrcorner \overline A (X_k)) + X_k \wedge B(X_k) \\
&= X_k \wedge (\nabla_{X_k} X - * (X \wedge A(X_k))) + X_k \wedge B(X_k) \\
&= X_k \wedge \nabla_{X_k} X - X_k \wedge * (X \wedge A(X_k)) + X_k \wedge B(X_k) \\
&= X_k \wedge \nabla_{X_k} X + * (X_k \lrcorner X \wedge A(X_k)) + X_k \wedge B(X_k) \\
&= d X + *(X (X_k) A(X_k) - A(X_k)(X_k) X) + X_k \wedge B(X_k) \\
&= d X + *(A(X) - X \textrm{tr} A) + X_k \wedge B(X_k).\qedhere
\end{align*}
\end{proof}

The space of $d^{\nabla^A}$-harmonic vector-valued $1$-forms $B$ is finite dimensional by ellipticity. Lemma~\ref{symcond} implies the identity $d X_{\dot A} - *(X_{\dot A} \textrm{tr}(A)-AX_{\dot A}) = - X_k \wedge B_{\dot A} (X_k)$, and for a given $B$, the solutions $X \in TM$ of
\[
dX - *(X\mathrm{tr}(A)-AX) + X_k \wedge B (X_k) = 0
\]
form an affine space of direction $\mathrm{ker} (X \mapsto dX - *(X\mathrm{tr}(A)-AX))$. Thus, to show that the space of deformations is finite dimensional, it is sufficient to prove that the solution space of 
\begin{equation} \label{kerX}
dX - *(X\mathrm{tr}(A)-AX) = 0
\end{equation}
is finite dimensional.

By applying the exterior derivative to equation~\eqref{kerX} we have $0 = d * (X\textrm{tr}(A)-AX) = * \delta (X\textrm{tr}(A)-AX)$, and then
\begin{equation*}
\delta (X\textrm{tr}(A)-AX)=0.
\end{equation*}
We define the differential operator
\begin{align*}
\Xi : \Lambda^1 M & \longrightarrow  \Lambda^2 M \oplus \Lambda^0 M,&
\Xi(X)=(d X - * (X\textrm{tr}(A)-AX), \delta (X \textrm{tr} A - A X)).
\end{align*}
To compute the principal symbol of $\Xi$, let $f$ be a smooth function on $M$. Again by using \eqref{HodgeFormulas} we obtain
\begin{align*}
\sigma_\Xi (d f)(X) &= \Xi (f X) - f \Xi X \\
&= (d (f X) - f d X, \delta [f (X \textrm{tr} A - A X)] - f \delta [X \textrm{tr} A - A X]) \\
&= (d f \wedge X, -\langle d f, (X \textrm{tr} A - A X)\rangle).
\end{align*}
Hence, the principal symbol of $\Xi$ is given by
\begin{equation*}
\sigma_\Xi (\alpha) (X) = (\alpha \wedge X, \langle\alpha, (X \textrm{tr} A - A X)\rangle).
\end{equation*}

We want to show that this principal symbol is injective in order to use the theory of elliptic operators. To do so, we first remark that by equation~\eqref{cgc}, the eigenvalues $(\lambda_1, \lambda_2, \lambda_3)$ of $A$ satisfy $\lambda_1 \lambda_2 + \lambda_1 \lambda_3 + \lambda_2 \lambda_3 = \frac{1}{2}\Scal^M$. We now use the hypothesis $\Scal^M>0$.

\begin{lemma} \label{algtech}
Let $B \in M_3(\RR)$ be a symmetric matrix with eigenvalues $(\lambda_1, \lambda_2, \lambda_3)$ such that $\lambda_1 \lambda_2 + \lambda_1 \lambda_3 + \lambda_2 \lambda_3 > 0$. Then $B - \mathrm{tr}(B) \mathrm{Id}$ is definite.
\end{lemma}
\begin{proof}
Since $B$ is symmetric and real, there is $P \in \mathrm{O}_3(\RR)$ such that
\[
P^T B P = \mathrm{diag} (\lambda_1, \lambda_2, \lambda_3).
\]
Thus, $P^T (B - \mathrm{tr}(B) \mathrm{Id}) P = - \mathrm{diag} (\lambda_2 + \lambda_3, \lambda_1 + \lambda_3, \lambda_1 + \lambda_2)$. Moreover, one has
\begin{align*}
(\lambda_2 + \lambda_3) (\lambda_1 + \lambda_3) &= \lambda_2 \lambda_1 + \lambda_2 \lambda_3 + \lambda_3 \lambda_1 + \lambda_3^2 > 0 
\end{align*}
and similarly $(\lambda_1 + \lambda_3) (\lambda_1 + \lambda_2)  > 0$, 
$(\lambda_2 + \lambda_3) (\lambda_1 + \lambda_2) > 0$.
We conclude that $\lambda_2 + \lambda_3, \lambda_1 + \lambda_3, \lambda_1 + \lambda_2$ have the same sign and are different from 0, so $B - \mathrm{tr}(B) \mathrm{Id}$ is definite.
\end{proof}

As a consequence of Lemma~\ref{algtech}, $A - \textrm{tr} (A) \textrm{Id}$ is definite under the assumption that $\Scal^M>0$. Now, let $\alpha \in \Lambda^1 M$ such that there is a non-zero vector $X$ with $\sigma_\Xi (\alpha) (X) = 0$. In particular, we have $g(\alpha, [A - \mathrm{tr}(A) \mathrm{Id}] X) = 0$ and $\alpha \wedge X = 0$. We deduce that $X = f \alpha$ and $g(\alpha, [A - \mathrm{tr}(A) \mathrm{Id}] \alpha) = 0$, so $\alpha = 0$ because the endomorphism $A - \textrm{tr} (A) \textrm{Id}$ is invertible. Consequently, the principal symbol of $\Xi$ is injective.

The operator $\Xi^* \Xi$ has the same kernel as $\Xi$ and is elliptic, so its kernel is finite dimensional (see e.g.\ \cite[Theorem 5.2]{LM}). Thus the space of infinitesimal deformations of $A$ is finite dimensional, ending the proof.
\end{proof}

The assumption $\Scal^M > 0$ is necessary, as shown by the following example:

\begin{rem}
We look at a flat Riemannian product $M=\SS^1 \times E$, where $E=\RR^2/\Gamma$ is an elliptic curve. Let $p:M \rightarrow \SS^1$ be the projection on the first factor, and $P:T M\rightarrow TM$ the orthogonal projection on the first factor in the tangent bundle. The endomorphism $P$ is parallel and symmetric, and it clearly satisfies Equation \eqref{introendcond}. Moreover, for any smooth function $f:\SS^1 \to \RR$, the symmetric endomorphism field $A_f:=(p^*f)P$ also satisfies Equation \eqref{introendcond} since all the terms in this equation vanish when at least one of the vectors $X,Y$ are tangent to the second factor $E$. Thus the space of infinitesimal deformations of $A_f$ contains the infinite-dimensional space $C^\infty(\SS^1)$.

Not every such $A_f\in\cCloc_M$ is necessarily associated to a Cauchy spinor, because the torus is not simply connected. Take a non-zero parallel spinor $\Psi$ on the flat torus $F \times E$, where $F$ is also an elliptic curve, and consider any closed simple curve $\gamma$ in $F$ of length $2\pi$, hence isometric to $\SS^1$. Then the manifold $\gamma \times E$ is isometric to $M$, so the restriction of $\Psi$ to $M$ is a Cauchy spinor on $M$ with respect to $A_\kk$, where $\kk$ is the geodesic curvature function of $\gamma$. Since the set of curvature functions of curves of length $2\pi$ in $F$ parametrized by arc-length is not finite-dimensional, it is evident that the deformation space of flat connections near such a $A_\kk$ cannot be finite dimensional either.
\end{rem}

\section{Deformations of Cauchy spinors on the three-sphere} \label{DeformationsOnS3}

Let us illustrate the above result in the case of the round $3$-sphere, noting that even in this simplest possible case Equation \eqref{introgks} is not yet fully understood.

We identify $\SS^3$ with the unit sphere in the quaternions $\HH \simeq \RR^4$. In this way, $\SS^3$ becomes a Lie group with Lie algebra the space of imaginary quaternions $\textrm{Im} \HH$. Let $(e_1, e_2, e_3)$ be the three left-invariant vector fields corresponding to the quaternions $i,j,k$. They form an orthonormal frame at any point of $\SS^3$. Recall that 
the Levi-Civita covariant derivative of
left-invariant vector fields on compact Lie groups is given by $\nabla_X Y = \frac{1}{2}[X,Y]$. Recall also that for an even permutation $(a,b,c)$ of the indices $(1,2,3)$, the Lie brackets are given by 
\begin{align}\label{crej}
[e_a,e_b]=2e_c.
\end{align}
We compute from here the covariant derivatives of these orthonormal vector fields:
\begin{align}
&\nabla_{e_1} e_1 = 0&&\nabla_{e_2} e_2 = 0&&\nabla_{e_3} e_3 = 0\nonumber \\
&\nabla_{e_1} e_2 = e_3 && \nabla_{e_2} e_3 = e_1 && \nabla_{e_3} e_1 = e_2   \label{covE}\\
&\nabla_{e_2} e_1 = - e_3 && \nabla_{e_3} e_2 = - e_1&&\nabla_{e_1} e_3 = - e_2.\nonumber
\end{align}
On the round sphere, the curvature tensor satisfies $\R(X,Y)=-X\wedge Y$ so
Equation \eqref{introendcond} rewrites
\begin{equation} \label{endsphere}
* d^\nabla A (X,Y) = X \wedge Y - A (X) \wedge A (Y).
\end{equation}
\begin{rem} \label{exEnd}
From \cite[Example 3.2]{MS2}, we know four families of Cauchy endomorphisms in $\cC_{\SS^3}$ :
\begin{itemize}
\item plus or minus the identity
\item the symmetric endomorphism fields constant in a left-invariant orthonormal frame, with eigenvalues 1, -3, -3
\item the symmetric endomorphism fields constant in a right-invariant orthonormal frame, with eigenvalues -1, 3, 3.
\end{itemize}
\end{rem}

It was already shown in \cite[Theorem 5.1]{MS3} that $\cCloc_{\SS^3}$ does not admit infinitesimal deformations at the endomorphisms $\pm \textrm{Id}$. Let us thus study the infinitesimal deformations of $\cCloc_{\SS^3}$ at the symmetric endomorphism field
\begin{equation}\label{defA0}
A_0 := e_1 \otimes e_1 - 3 (e_2 \otimes e_2 + e_3 \otimes e_3).
\end{equation}

\begin{lemma} \label{Cohom}
Let $(M,g)$ be a simply connected $3$-manifold and $A\in\Gamma(\End(TM))$ such that the connection $\nabla^A$ defined by \eqref{modifiedconnection} is flat. Then the cohomology space $H^1(M,d^{\nabla^A})$ vanishes, hence there are no nonzero $d^{\nabla^A}$-harmonic sections in $\Gamma(\Lambda^1M\otimes TM)$. 
\end{lemma}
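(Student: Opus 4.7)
The plan is to exploit the fact that a flat connection on a simply connected manifold has trivial holonomy, so it admits a global parallel frame, which reduces the twisted de Rham complex to an untwisted one.

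Since the $1$-form valued endomorphism $\overline A$ takes values in skew-symmetric endomorphisms, the modified connection $\nabla^A=\nabla+\overline A$ is metric, and its flatness combined with the simple connectedness of $M$ makes parallel transport path-independent and isometric. Starting from an orthonormal basis of $T_{x_0}M$ at a base point $x_0\in M$, parallel transport then produces a global $\nabla^A$-parallel orthonormal frame $(V_1,V_2,V_3)$ of $TM$. Expanding any twisted form $E\in\Gamma(\Lambda^kM\otimes TM)$ in this frame as $E=\sum_{i=1}^{3}\omega_i\otimes V_i$ with $\omega_i\in\Gamma(\Lambda^kM)$, the identity $\nabla^A V_i=0$ yields
\[d^{\nabla^A}E=\sum_{i=1}^{3}d\omega_i\otimes V_i.\]
Consequently, the complex $(\Gamma(\Lambda^\bullet M\otimes TM),d^{\nabla^A})$ is isomorphic, as a differential complex, to three copies of the ordinary de Rham complex of $M$, and
\[H^1(M,d^{\nabla^A})\simeq H^1_{\mathrm{dR}}(M)^{\oplus 3}=0\]
since $M$ is simply connected.

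For the vanishing of harmonic sections, I would work under the compactness assumption implicit in the applications (notably $M=\SS^3$, where this lemma will be invoked). If $B\in\Gamma(\Lambda^1M\otimes TM)$ satisfies $d^{\nabla^A}B=0=\delta^{\nabla^A}B$, then the first equation together with the vanishing of $H^1(M,d^{\nabla^A})$ furnishes $\alpha\in\Gamma(TM)$ with $B=d^{\nabla^A}\alpha$, and a standard integration by parts yields
\[\|B\|_{L^2}^2=\langle d^{\nabla^A}\alpha,B\rangle_{L^2}=\langle\alpha,\delta^{\nabla^A}B\rangle_{L^2}=0,\]
so $B=0$.

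The main ``obstacle'' here is purely conceptual rather than technical: once one recognizes that flatness combined with simple connectedness trivializes $(TM,\nabla^A)$ as a flat bundle and thereby turns the twisted complex into three copies of the ordinary de Rham complex, the topological and Hodge-theoretic arguments are entirely classical.
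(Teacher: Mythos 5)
Your proof is correct and follows essentially the same route as the paper's: construct a global $\nabla^A$-parallel frame (possible since $M$ is simply connected and $\nabla^A$ is flat), use it to identify the twisted complex $(\Gamma(\Lambda^\bullet M\otimes TM),d^{\nabla^A})$ with three copies of the de Rham complex, and conclude $H^1(M,d^{\nabla^A})\simeq H^1_{\mathrm{dR}}(M)^{\oplus 3}=0$. The only differences are cosmetic: you additionally observe that the frame may be taken orthonormal (true but not needed) and you spell out the Hodge-theoretic integration by parts for the absence of harmonic sections, which the paper leaves implicit; you are also right to note that this last step uses compactness of $M$, which holds in the intended application to $\SS^3$.
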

\begin{proof}
Fix a  global frame $s_1,s_2,s_3\in\Gamma(TM)$ consisting of $\nabla^A$-parallel vector fields, possible since $M$ is simply-connected.
In this basis, the elliptic complex $(\Gamma(\Lambda^*M\otimes TM),d^{\nabla^A})$ is isomorphic to the tensor product of the standard
de Rham complex with $\RR^3$. It follows that $H^1(M,d^{\nabla^A})\simeq H^1(M)\otimes \RR^3$, and this space vanishes since the first Betti number of a simply-connected manifold is zero.
\end{proof}
By the analysis from the proof of Theorem \ref{deformation} and Lemma~\ref{Cohom}, any infinitesimal deformation $\dot A$ can be written as the covariant derivative $\nabla^{A_0}$ of a vector field $X_{\dot A}$. By Lemma~\ref{symcond}, the symmetry of $\dot{A}$ leads to the equation
\begin{equation} \label{eqA0}
d X + *(A_0 X + 5 X)=0.
\end{equation}

If we write $X =: x^k e_k$, we have $d X = d (x^k e_k) = d x^k \wedge e_k + x^k d e_k$ and the exterior derivatives are given by $d e_k = - 2 *e_k$ for all $k \in \{1,2,3\}$.
Finally, equation~\eqref{eqA0} rewrites
\[
d x^k \wedge e_k + 4 e_2 \wedge e_3 = 0.
\]

This means that we have the differential system of equations in the unknown functions $x^1,x^2,x^3\in C^\infty(\SS^3)$:
\begin{align*}
e_2 (x^1) = e_1 (x^2), && e_3 (x^1) = e_1 (x^3),&&
e_3 (x^2) = e_2 (x^3) + 4 x^1.
\end{align*}
By taking further partial derivatives one has
\begin{align*} 
e_3 e_2 (x^1) &= e_3 e_1 (x^2) = e_1 e_3 (x^2) - 2 e_2 (x^2) \\
e_2 e_3 (x^1) &= e_2 e_1 (x^3) = e_1 e_2 (x^3) - 2 e_3 (x^2)\\
4 e_1 (x^1) &= e_1 e_3 (x^2) - e_1 e_2 (x^3).
\end{align*}
We subtract the first equation from the sum of the last two, and we obtain
\[
3 e_1 (x^1) + e_2 (x^2) + e_3 (x^3) = 0.
\]
Hence,
\[
3 e_1 e_1 (x^1) = - e_1 e_2 (x^2) - e_1 e_3 (x^3), \quad e_2 e_2 (x^1) = e_2 e_1 (x^2), \quad e_3 e_3 (x^1) = e_3 e_1 (x^3),
\]
and summing these three equations one has
\[
3 e_1 e_1 (x^1) + e_2 e_2 (x^1) + e_3 e_3 (x^1) = 2 e_2 (x^3) - 2 e_3 (x^2) = - 8 x^1.
\]
Consequently, we have to solve
\[
\Delta_{B} x^1 = 8 x^1
\]
where $\Delta_{B} := - (3 e_1 e_1 + e_2 e_2 + e_3 e_3)$ is the Laplacian on the Berger sphere with metric $\frac{1}{3} e_1^2 + e_2^2 + e_3^2$. From \cite[Section 6.2]{BBB} we know that the multiplicity of the eigenvalue 8 of $\Delta_B$ is 3, and the associated eigenspace $V_8$ is spanned by the functions $\pi^* y_k$ for $k \in \{ 1,2,3 \}$ where $\pi : \SS^3 \rightarrow \SS^2 (\frac{1}{2})$ is the Hopf fibration for which $e_1$ is tangent to the fibers and $y_k$ is the $k^\text{th}$ coordinate in $\RR^3 \supset \SS^2$. In particular, the action of $e_1$ is trivial on $V_8$, so $e_1(x^1)=0$.

\begin{lemma} \label{derivatives}
For any $k \in \{ 1,2,3 \}$ one has $e_2 e_3 (\pi^* y_k) = e_3 e_2 (\pi^* y_k) = 0$ and $e_2 e_2 (\pi^* y_k) = e_3 e_3 (\pi^* y_k) = -4 \pi^* y_k$.
\end{lemma}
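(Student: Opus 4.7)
The plan is to use the quaternionic description of $\SS^3$ to reduce the problem to a single commutator computation in $\mathrm{Im}(\HH) \simeq \mathfrak{so}(3)$. Identify $\SS^3$ with the unit quaternions so that the left-invariant frame is $e_1(q)=qi$, $e_2(q)=qj$, $e_3(q)=qk$. Then the Hopf fibration with $e_1$ vertical can be written concretely as
\[
\pi\colon \SS^3 \longrightarrow \SS^2\!\left(\tfrac{1}{2}\right)\subset \mathrm{Im}(\HH) \cong \RR^3, \qquad \pi(q)=\tfrac{1}{2}\, qi\bar q,
\]
and by construction $\pi^* y_k$ is the $k$-th coordinate of $\pi(q)$ in the basis $\{i,j,k\}$ of $\mathrm{Im}(\HH)$.

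For any $X \in \mathrm{Im}(\HH)$ introduce the $\mathrm{Im}(\HH)$-valued function $F_X(q) := qX\bar q$ on $\SS^3$; note that $\pi = \tfrac{1}{2} F_i$. The flow of the left-invariant vector field $e_Y$ is $\phi_t(q) = qe^{tY}$, and since $Y$ is imaginary one has $\overline{qe^{tY}} = e^{-tY}\bar q$. Differentiating $F_X(\phi_t(q)) = q\, e^{tY} X e^{-tY}\bar q$ at $t=0$ yields the key ``bracket shift'' identity
\[
e_Y(F_X) = F_{[Y,X]},
\]
where $[\cdot,\cdot]$ denotes the commutator in the quaternion algebra. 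Iterating, $e_Z e_Y(\pi) = \tfrac{1}{2}\, F_{[Z,[Y,i]]}$, so every second derivative of $\pi^* y_k$ is reduced to an iterated Lie bracket.

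Using $[j,i] = -2k$, $[k,i] = 2j$ and $[j,k] = 2i$ in $\mathrm{Im}(\HH)$, the four required brackets are
\[
[j,[k,i]] = [j,2j] = 0, \qquad [k,[j,i]] = [k,-2k] = 0,
\]
\[
[j,[j,i]] = [j,-2k] = -4i, \qquad [k,[k,i]] = [k,2j] = -4i.
\]
Hence $e_2 e_3(\pi) = e_3 e_2(\pi) = 0$ and $e_2 e_2(\pi) = e_3 e_3(\pi) = -2F_i = -4\pi$. Taking the $k$-th component of each equation in $\mathrm{Im}(\HH)$ yields the four identities of the lemma.

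The only point requiring care — not really an obstacle — is verifying that the explicit formula $\pi(q) = \tfrac{1}{2} qi\bar q$ is the Riemannian submersion of the statement: the verticality of $e_1$ follows from $[i,i]=0$, and the factor $\tfrac{1}{2}$ is fixed by the requirement that horizontal lengths be preserved, so the pulled-back coordinates match those appearing in the spectral decomposition of \cite{BBB}. Once this identification is in hand, the argument is purely algebraic commutator bookkeeping.
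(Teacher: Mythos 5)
Your proof is correct, and it takes a genuinely different route from the paper's. The paper simply observes that $\Span(\pi^* y_1, \pi^* y_2, \pi^* y_3)$ is spanned by the three harmonic quadratic polynomials $a_1^2+a_2^2-a_3^2-a_4^2$, $a_1a_4+a_2a_3$, $a_1a_3-a_2a_4$ in the ambient coordinates of $\RR^4$, and asserts the result ``by direct computation'' --- one applies the explicit derivations $e_2, e_3$ (written out in the $a_k$) to these cubics-in-disguise and collects terms. Your argument replaces that brute-force check with the single identity $e_Y(F_X) = F_{[Y,X]}$ for $F_X(q)=qX\bar q$, which turns every second derivative of $\pi=\tfrac12 F_i$ into an iterated bracket $\tfrac12 F_{[Z,[Y,i]]}$ in $\mathrm{Im}(\HH)$. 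The computation is then four one-line brackets, and --- more valuably --- the argument explains \emph{why} $e_2e_2$ and $e_3e_3$ act as the same scalar $-4$ on $V_8$ and why the mixed derivatives vanish: these are just statements about $\mathrm{ad}_j^2 i = \mathrm{ad}_k^2 i = -4i$ and $\mathrm{ad}_j\mathrm{ad}_k i = \mathrm{ad}_k\mathrm{ad}_j i = 0$ in $\mathfrak{su}(2)$. One small remark: your $\pi(q)=\tfrac12 qi\bar q$ is \emph{a} Hopf fibration with $e_1$ vertical, but the paper's could a priori differ from yours by postcomposition with a rotation of $\SS^2(\tfrac12)$; this is harmless because the four identities of the lemma are statements about the action of $e_ae_b$ on $\Span(\pi^*y_1,\pi^*y_2,\pi^*y_3)$, which is insensitive to such a rotation. (Incidentally, your formula reproduces exactly the paper's three polynomials up to scale and sign, as a quick expansion of $qi\bar q$ confirms.)
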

\begin{proof}
The space $\Span (\pi^* y_1, \pi^* y_2, \pi^* y_3)$ is generated by the three harmonic quadratic polynomials
$\{
a_1^2 + a_2^2 - a_3^2 - a_4^2, a_1 a_4 + a_2 a_3, a_1 a_3 - a_2 a_4
\}$
restricted to $\SS^3$, where $a_k$ stands for the $k^\text{th}$ coordinate in $\RR^4$. The lemma follows by a direct computation.
\end{proof}

Now we have
\begin{align*}
e_1 e_1 (x^2) &= e_1 e_2 (x^1) = e_2 e_1 (x^1) + 2 e_3 (x^1) = 2 e_3 (x^1) \\
e_2 e_2 (x^2) &= - e_2 e_3 (x^3) \\
e_3 e_3 (x^2) &= e_ 3 e_2 (x^3) + 4 e_3 (x^1) 
\end{align*}
and by adding these equations we obtain for the Laplacian $\Delta=- (e_1 e_1 + e_2 e_2 + e_3 e_3)$ of the round metric:
\begin{equation*}
\Delta x^2 = - 2 e_3 (x^1) + 2 e_1 (x^3) - 4 e_3 (x^1) = - 4 e_3 (x^1).
\end{equation*}

In the same way we have
\begin{align*}
e_1 e_1 (x^3) &= e_1 e_3 (x^1) = - 2 e_2 (x^1) \\
e_2 e_2 (x^3) &= e_2 e_3 (x^2) - 4 e_2 (x^1) \\
e_3 e_3 (x^3) &= - e_3 e_2 (x^2),
\end{align*}
thus,
\begin{equation*}
\Delta x^3 = 2 e_2 (x^1) - 2 e_1 (x^2) + 4 e_2 (x^1) = 4 e_2 (x^1).
\end{equation*}
We are left with the system
\begin{align*}
\Delta x^2 = - 4 e_3 (x^1), && \Delta x^3 = 4 e_2 (x^1).
\end{align*}

Since $e_1, e_2, e_3$ are Killing vector fields, $\Delta$ commutes with $e_1,e_2$ and $e_3$. 
This implies that 
\begin{align*}
\Delta(x^2+\tfrac{1}{2}e_3(x^1))=0,&&\Delta(x^3-\tfrac{1}{2}e_2(x^1))=0.
\end{align*}
Since harmonic functions on a compact manifold must be constant, we deduce that $x_2 = - \frac{1}{2} e_3 (x^1) + c_2$ and $x_3 = \frac{1}{2} e_2 (x^1) + c_3$ for some constants $c_2 , c_3 \in \RR$. Finally, the space of solutions 
\begin{equation} \label{defor}
\mathcal S := \{x^1 e_1+ (- \tfrac{1}{2}e_3 (x^1) + c_2)e_2+ (\tfrac{1}{2}e_2 (x^1) + c_3)e_3; x^1\in V_8, \: c_2,c_3\in \RR\}
\end{equation}
is $5$-dimensional, since the eigenspace $V_8$ of $\Delta_B$ for the eigenvalue $8$ has dimension $3$.

From \eqref{crej}, the Lie derivatives of $A_0$ in the direction of $e_2,e_3$ are given by
\begin{align*}
\mathcal L_{e_2} A_0 &= \mathcal L_{e_2} (e_1 \otimes e_1 - 3 e_2 \otimes e_2 - 3 e_3 \otimes e_3) \\
&= -2 (e_3 \otimes e_1 + e_1 \otimes e_3) - 6 (e_1 \otimes e_3 + e_3 \otimes e_1) \\
&= - 8 (e_3 \otimes e_1 + e_1 \otimes e_3); \\
\mathcal L_{e_3} A_0 &= \mathcal L_{e_3} (e_1 \otimes e_1 - 3 e_2 \otimes e_2 - 3 e_3 \otimes e_3) \\
&= 2 (e_2 \otimes e_1 + e_1 \otimes e_2) + 6 (e_1 \otimes e_2 + e_2 \otimes e_1) \\
&= 8 (e_2 \otimes e_1 + e_1 \otimes e_2).
\end{align*}
Let $X := x^k e_k \in \mathcal S$. One has
\begin{align*}
{\nabla^{A_0}} X ={}& d x^k \otimes e_k + x^k \nabla^{A_0} e_k \\
={}& d x^k \otimes e_k + 2 x^1 (e_2 \otimes e_3 - e_3 \otimes e_2) \\
{}&+ 2 x^2 (e_1 \otimes e_3 + e_3 \otimes e_1) - 2 x^3 (e_1 \otimes e_2 + e_2 \otimes e_1),
\end{align*}
and we can compute the coefficients of ${\nabla^{A_0}} X$ using Lemma~\ref{derivatives}:
\begin{align*}
\langle {\nabla^{A_0}} X, e_1 \otimes e_1 \rangle &= 2 e_1 (x^1) = 0 \\
\langle {\nabla^{A_0}} X, e_2 \otimes e_2 \rangle &= - e_2 e_3 (x^1) = 0 \\
\langle {\nabla^{A_0}} X, e_3 \otimes e_3 \rangle &= e_3 e_2 (x^1) = 0 \\
\langle {\nabla^{A_0}} X, e_1 \otimes e_2 \rangle &= - e_1 e_3 (x^1) - 2 e_2 (x^1) - 2 c_2 = 2 e_2 (x^1) - 2 e_2 (x^1) - 2 c_2 = - 2 c_2 \\
\langle {\nabla^{A_0}} X, e_1 \otimes e_3 \rangle &= -2 e_3 (x^1) + e_1 e_2 (x^1) + 2 c_1 = -2 e_3 (x^1) + 2 e_3 (x^1) + 2 c_1 = 2 c_1 \\
\langle {\nabla^{A_0}} X, e_2 \otimes e_3 \rangle &= e_2 e_2 x^1 + 4 x^1 = 0.
\end{align*}
From the symmetry of $\nabla^{A_0} X$ we conclude that
\begin{align*}
\nabla^{A_0} X &= 2 c_1 (e_1 \otimes e_3 + e_3 \otimes e_1) - 2 c_2 (e_1 \otimes e_2 + e_2 \otimes e_1) \\
&= - \frac{1}{4} (c_1 \mathcal L_{e_3} A_0 + c_2 \mathcal L_{e_2} A_0).
\end{align*}
Consequently, we proved the following proposition:

\begin{prop} \label{deforA}
The space of infinitesimal deformations of $\cCloc_{\SS^3}$ at $A_0$ is of dimension 2, and consists exactly of the Lie derivatives of $A_0$ in the directions spanned by $(e_2,e_3)$.
\end{prop}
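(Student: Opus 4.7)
The starting point is the general deformation machinery already set up in the proof of Theorem~\ref{deformation}: any infinitesimal deformation $\dot A$ at $A_0$ must satisfy $d^{\nabla^{A_0}} \dot A = 0$ and hence Hodge-decomposes as $\dot A = \nabla^{A_0} X + B$ with $B$ a $d^{\nabla^{A_0}}$-harmonic vector-valued $1$-form. Since $\SS^3$ is simply connected, Lemma~\ref{Cohom} forces $B = 0$, so the problem reduces to determining which vector fields $X$ produce a \emph{symmetric} endomorphism $\nabla^{A_0} X$, and then quotienting by those $X$ for which $\nabla^{A_0} X = 0$. The symmetry condition, via Lemma~\ref{symcond} specialized to $A_0$, is exactly the linear PDE \eqref{eqA0}.

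The plan is to expand $X = x^k e_k$ in the left-invariant frame and turn \eqref{eqA0} into a system of three scalar equations using the bracket relations \eqref{crej} and $de_k = -2\ast e_k$. By cross-differentiating these and recombining, I expect to isolate a single eigenvalue equation for $x^1$ with respect to the Berger Laplacian $\Delta_B = -(3e_1^2 + e_2^2 + e_3^2)$, which is the natural operator here because $A_0$ is diagonal in the left-invariant frame with a distinguished direction $e_1$. The identification of the eigenvalue should be $\Delta_B x^1 = 8 x^1$; invoking the known spectrum on the Berger sphere identifies the relevant eigenspace $V_8$ as the $3$-dimensional span of Hopf pull-backs $\pi^* y_k$. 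Then, using the commutativity of the round Laplacian $\Delta$ with the Killing fields $e_i$, I would show that $x^2$ and $x^3$ are determined by $x^1$ up to two additive constants $c_2,c_3$, yielding a $5$-dimensional affine solution space $\mathcal S$.

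The final step is to compute the symmetric endomorphism $\nabla^{A_0} X$ for $X \in \mathcal S$ and read off its image. Using \eqref{modifiedconnection}, \eqref{covE}, and the quadratic structure of the elements of $V_8$ (encoded in Lemma~\ref{derivatives}), the component coming from $V_8$ should cancel identically in every matrix entry; only the two-parameter constant part $(c_2,c_3)$ contributes. A direct calculation of $\mathcal L_{e_2} A_0$ and $\mathcal L_{e_3} A_0$ shows they are nonzero, linearly independent symmetric tensors supported in the off-diagonal $(1,2)$- and $(1,3)$-positions, and matching coefficients identifies $\nabla^{A_0} X$ with a linear combination of these two Lie derivatives. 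Thus the deformation space is exactly $\Span(\mathcal L_{e_2} A_0,\mathcal L_{e_3} A_0)$ and has dimension $2$. The main obstacle is the cancellation of the $V_8$-contribution: this is not a formal statement but depends on the precise interplay between the Berger eigenvalue $8$ and the quadratic-polynomial realization of $V_8$, and without it one would spuriously obtain a $5$-dimensional space.
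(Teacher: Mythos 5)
Your proposal follows exactly the same route as the paper's own argument: Hodge decomposition with $B=0$ by Lemma~\ref{Cohom}, the symmetry constraint \eqref{eqA0} from Lemma~\ref{symcond}, the reduction to the Berger eigenvalue equation $\Delta_B x^1 = 8x^1$ with eigenspace $V_8$, the determination of $x^2, x^3$ from $x^1$ up to constants, and the final cancellation (via Lemma~\ref{derivatives}) showing that only the two constants survive in $\nabla^{A_0}X$, matching $\mathcal L_{e_2}A_0$ and $\mathcal L_{e_3}A_0$. You also correctly identify the key non-formal step, so this is essentially the paper's proof.
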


The same analysis holds for endomorphism fields constant in a right-invariant orthonormal frame.

\begin{rem}
The infinitesimal deformations described in Proposition~\ref{deforA} can be obtained as the tangent vectors to actual smooth curves in $\cCloc_{\SS^3}$, as described after Definition \ref{defordef}. This comes from the fact that the solutions we know form a differentiable manifold with four connected components.
\end{rem}

\section{Endomorphisms fields on the $3$-sphere and the second fundamental form} \label{metric}

\subsection{Thickening of the three-sphere} In the real-analytic case, it was shown \cite[Theorem 1.1]{A2M} that the existence of a Cauchy spinor $\psi$ over $(M,g)$ is equivalent to the existence of a metric of the form $g^\mathcal Z = \dd t^2 + g_t$ on a cylinder $(-\epsilon,\epsilon)\times M$ with $g_0 = g$, which carries a parallel spinor $\Psi$. In this case, the Cauchy endomorphism $A$ from \eqref{introgks} is the second fundamental form of the hypersurface $\{ 0 \} \times M$, and $\psi$ is the restriction of $\Psi$ to this hypersurface. Moreover, the germ near $t=0$ of such a metric is unique by analyticity.

When $A$ is plus or minus the identity on the sphere $\SS^3$, the resulting metric on $\cZ$ is flat and isometric to the induced metric on a tubular neighborhood of $\SS^3$ through the canonical embedding in $\RR^4$.

In this section we shall describe the ambient metric obtained by this thickening procedure in the case where the Cauchy endomorphism field is $A_0$ defined in \eqref{defA0} in terms of the left-invariant orthonormal frame $(e_1,e_2,e_3)$. 
Once again, a similar analysis can be carried out in the case of an endomorphism field constant in a \emph{right}-invariant orthonormal frame, as it amounts to reversing the orientation on $\cZ$.

For an interval $I \ni 0$ (to be defined later), we look for a metric on $\mathcal Z := \SS^3 \times I$ of the form
\begin{align} \label{metricgt}
g &:= \dd t^2 + g_t, &
g_t =: a(t)^2 e_1^2 + b(t)^2 (e_2^2 + e_3^2)
\end{align}
such that $\Sigma \mathcal Z$ carries a parallel spinor $\Psi$ and $A_0$ is the Weingarten map of $\SS^3 \times\{ 0 \}$. Moreover, $g_0$ has to coincide with the metric of the round sphere, so $a(0) = b(0) = 1$, and we assume that the functions $a$ and $b$ are non-negative.

We introduce the notation $M_t := \SS^3 \times \{ t \}$. Because of the form of the metric $g_t$ defined by \eqref{metricgt}, the hypersurfaces $M_t$ are Berger spheres.

The covariant derivative on $(M_t, g_t)$ is denoted $\nabla^t$. We also denote by $\R^t$ the Riemann curvature tensor of $(M_t, g_t)$, and by $A_t$ the Weingarten map of the hypersurface $M_t$ in $\mathcal Z$.

The fact that $\Psi$ is parallel in $\Sigma \mathcal Z$ implies that each hypersurface $M_t$ carries a Cauchy spinor (the restriction of $\Psi$ to $M_t$) with associated endomorphism field $A_t$. In particular, Equation \eqref{introendcond} gives for all $t$
\begin{equation} \label{endEqt}
0 = \R^t (X,Y) + *d^{\nabla^t} A_t (X, Y) + A_t (X) \wedge A_t (Y)
\end{equation}
for all $X,Y \in T M_t \simeq T M$. 
In turn, the identity \eqref{endEqt} for all $t$ implies the existence of a parallel spinor on $\cZ$ by the main result of \cite{A2M}.

We shall identify vectors and 1-forms on $M_t$  using the metric $g_t$. Notice that the Hopf frame $(e_1,e_2,e_3)$ is orthogonal, but not orthonormal on $M_t$. Using Koszul's formula and the expression \eqref{crej}
for the Lie brackets of $e_1, e_2, e_3$, we obtain by a straightforward computation that
\begin{align*}
\nabla^t_{e_1} e_2 &= \left( 2 - \frac{a^2}{b^2}(t) \right) e_3, & \nabla^t_{e_2} e_1 &= -\frac{a^2}{b^2}(t) e_3, &
\nabla^t_{e_1} e_3 &= \left( \frac{a^2}{b^2}(t) - 2 \right) e_2, \\
\nabla^t_{e_3} e_1 &= \frac{a^2}{b^2}(t) e_2, & \nabla^t_{e_2} e_3 &= e_1, & \nabla^t_{e_3} e_2 &= - e_1 \\
\nabla^t_{e_1} e_1 &= 0, & \nabla^t_{e_2} e_2 &= 0, & \nabla^t_{e_3} e_3 &= 0.
\end{align*}
From this we immediately get
\begin{align*}
\R^t_{e_1,e_2} e_1 &= - \frac{a^4}{b^4}(t) e_2,&
\R^t_{e_1,e_2} e_2 &= \frac{a^2}{b^2}(t) e_1,&
\R^t_{e_1,e_2} e_3 &= 0\\
\R^t_{e_1,e_3} e_1 &= - \frac{a^4}{b^4}(t) e_3,&
\R^t_{e_1,e_3} e_2 &= 0,&
\R^t_{e_1,e_3} e_3 &= \frac{a^2}{b^2}(t) e_1\\
\R^t_{e_2,e_3} e_1 &= 0,&
\R^t_{e_2,e_3} e_2 &= \left( 3 \frac{a^2}{b^2}(t) - 4 \right) e_3&
\R^t_{e_2,e_3} e_3 &= \left( 4 - 3 \frac{a^2}{b^2}(t) \right) e_3.
\end{align*}
Thus, in the basis $(e_1, e_2, e_3)$ one has 
\begin{align*}
\R^t_{e_1,e_2} = 
\begin{bmatrix}
0 & \frac{a^2}{b^2} & 0 \\
- \frac{a^4}{b^4} & 0 & 0 \\
0 & 0 & 0
\end{bmatrix}&&
\R^t_{e_1,e_3} = \begin{bmatrix}
0 & 0 & \frac{a^2}{b^2} \\
0 & 0 & 0 \\
- \frac{a^4}{b^4} & 0 & 0
\end{bmatrix}&&
\R^t_{e_2,e_3} = \begin{bmatrix}
0 & 0 & 0 \\
0 & 0 & 4 - 3 \frac{a^2}{b^2} \\
0 & 3 \frac{a^2}{b^2} - 4 & 0
\end{bmatrix}
\end{align*}
so, in terms of $2$-forms, these $t$-dependent curvature matrices become
\begin{align*}
\R^t (e_1,e_2) = - \frac{a^2}{b^4} e_1 \wedge e_2 && \R^t (e_1,e_3) = - \frac{a^2}{b^4} e_1 \wedge e_3 &&
\R^t (e_2,e_3) = \frac{3 a^2 - 4 b^2}{b^4} e_2 \wedge e_3.
\end{align*}

Let us now analyze the Weingarten maps $A_t$. By \cite[Proposition 4.1]{BGM}, $A_t$ is computed in the frame $\{e_1, e_2, e_3\}$ by the formula $g_t(A_t(X), Y) = -\frac{1}{2} \ddt \left(g_t(X,Y) \right)$. We obtain
\begin{equation*}
A_t = - \frac{\dot{a}}{a}(t) e_1 \otimes e_1 - \frac{\dot{b}}{b}(t) e_2 \otimes e_2 - \frac{\dot{b}}{b}(t) e_3 \otimes e_3.
\end{equation*}
The twisted exterior differential of $A_t$ is
\begin{align*}
(\dd^{\nabla^t}A_t)(e_1,e_2) &= \nabla^t_{e_1} (A_t e_2) - A_t \nabla^t_{e_1} e_2 - \nabla^t_{e_2} (A_t e_1) + A_t \nabla^t_{e_2} e_1 \\
&= - \frac{\dot{b}}{b}(t) \nabla^t_{e_1} e_2 - \left( 2 - \frac{a^2}{b^2}(t) \right) A_t e_3 + \frac{\dot{a}}{a}(t) \nabla^t_{e_2} e_1 - \frac{a^2}{b^2}(t) A_t e_3 \\
&= \frac{a^2}{b^2}(t) \left( \frac{\dot{b}}{b}(t) - \frac{\dot{a}}{a}(t) \right) e_3 \\
(\dd^{\nabla^t}A_t)(e_1,e_3) &= \nabla^t_{e_1} (A_t e_3) - A_t \nabla^t_{e_1} e_3 - \nabla^t_{e_3} (A_t e_1) + A_t \nabla^t_{e_3} e_1 \\
&= - \frac{\dot{b}}{b}(t) \nabla^t_{e_1} e_3 - \left( \frac{a^2}{b^2}(t) - 2 \right) A_t e_2 + \frac{\dot{a}}{a}(t) \nabla^t_{e_3} e_1 + \frac{a^2}{b^2}(t) A_t e_2 \\
&= \frac{a^2}{b^2}(t) \left( \frac{\dot{a}}{a}(t) - \frac{\dot{b}}{b}(t) \right) e_2 \\
(\dd^{\nabla^t}A_t)(e_2,e_3) &= \nabla^t_{e_2} (A_t e_3) - A_t \nabla^t_{e_2} e_3 - \nabla^t_{e_3} (A_t e_2) + A_t \nabla^t_{e_3} e_2 \\
&= - \frac{\dot{b}}{b}(t) \nabla^t_{e_2} e_3 - A_t e_1 + \frac{\dot{b}}{b}(t) \nabla^t_{e_3} e_2 - A_t e_1 \\
&= 2 \left( \frac{\dot{a}}{a}(t) - \frac{\dot{b}}{b}(t) \right) e_1.
\end{align*}
Thus
\begin{align*}
* (\dd^{\nabla^t}A_t)(e_1,e_2) &= \frac{a}{b^2}(t) \left( \frac{\dot{b}}{b}(t) - \frac{\dot{a}}{a}(t) \right) \,e_1 \wedge e_2, \\
*(\dd^{\nabla^t}A_t)(e_1,e_3) &= \frac{a}{b^2}(t) \left( \frac{\dot{b}}{b}(t) - \frac{\dot{a}}{a}(t) \right) \,e_1 \wedge e_3, \\
*(\dd^{\nabla^t}A_t)(e_2,e_3) &= \frac{2 a}{b^2}(t) \left( \frac{\dot{a}}{a}(t) - \frac{\dot{b}}{b}(t) \right) \,e_2 \wedge e_3.
\end{align*}
Finally, Equation \eqref{endEqt} rewrites as the system
\begin{equation*}
\left\lbrace
\begin{aligned}
0 &= \R^t_{e_1,e_2} + * (\dd^{\nabla^t} A_t) (e_1,e_2) + A_t(e_1) \wedge A_t(e_2) \\
0 &= \R^t_{e_1,e_3} + * (\dd^{\nabla^t} A_t) (e_1,e_3) + A_t(e_1) \wedge A_t(e_3) \\
0 &= \R^t_{e_2,e_3} + * (\dd^{\nabla^t} A_t) (e_2,e_3) + A_t(e_2) \wedge A_t(e_3)
\end{aligned}
\right.
\end{equation*}
and taking into account the previous computations, this system reads
\begin{equation*}
\left\lbrace
\begin{aligned}
0 &= - \frac{a^2}{b^4} e_1 \wedge e_2 + \frac{a}{b^2} \left( \frac{\dot{b}}{b} - \frac{\dot{a}}{a} \right) e_1 \wedge e_2 + \frac{\dot{a} \dot{b}}{ab} e_1 \wedge e_2 \\
0 &= - \frac{a^2}{b^4} e_1 \wedge e_3 + \frac{a}{b^2} \left( \frac{\dot{b}}{b} - \frac{\dot{a}}{a} \right) e_1 \wedge e_3 + \frac{\dot{a} \dot{b}}{ab} e_1 \wedge e_3 \\
0 &= \frac{3 a^2 - 4 b^2}{b^4} e_2 \wedge e_3 + \frac{2 a}{b^2} \left( \frac{\dot{a}}{a} - \frac{\dot{b}}{b} \right) e_2 \wedge e_3 + \left( \frac{\dot{b}}{b} \right)^2 e_2 \wedge e_3
\end{aligned}
\right.
\end{equation*}
so we are left with the two independent equations
\begin{equation} \label{system}
\left\lbrace
\begin{aligned}
0 &= - \frac{a^2}{b^4} - \frac{\dot{a}}{b^2} + \frac{a \dot{b}}{b^3} + \frac{\dot{a} \dot{b}}{ab} \\
0 &= \frac{3 a^2 - 4 b^2}{b^4} + \frac{2 \dot{a}}{b^2} - \frac{2 a \dot{b}}{b^3} + \left( \frac{\dot{b}}{b} \right)^2.
\end{aligned}
\right.
\end{equation}
Moreover, from the identity $g_0 (A_0(X), Y) = -\frac{1}{2} \dot{g}_0 (X, Y)$ for all $X,Y \in T M_0$, we have the initial conditions
\begin{align*}
a(0) = b(0) = 1&& \dot{a}(0) = - 1&& \dot{b}(0) = 3.
\end{align*}

The first equation of \eqref{system} can be rewritten as
\begin{align*}
0 & = \frac{1}{ab}\left( \dot{b} - \frac{a}{b} \right) \left( \frac{a^2}{b^2} + \dot{a} \right),
\end{align*}
thus either $\dot{b} - \frac{a}{b} = 0$ or $\frac{a^2}{b^2} + \dot{a} = 0$. However, from the initial conditions one has $\dot{b}(0) - \frac{a}{b}(0) =  2 \neq 0$ so the first case never occurs. Consequently, we get
\begin{equation*}
\dot{a} = - \frac{a^2}{b^2}.
\end{equation*}
Substituting this $\dot{a}$ in the second equation of the system \eqref{system}, one obtains by factorization
\begin{align*}
0 &= \frac{1}{b^4}(a - b \dot{b} - 2 b) (a - b \dot{b} + 2 b).
\end{align*}
Thus, either $a - b \dot{b} - 2 b = 0$ or $a - b \dot{b} + 2 b = 0$. The initial conditions give $a(0) - b \dot{b}(0) = 1 - 3 = - 2$
and we conclude that the second case occurs. We have reduced \eqref{system} to the simpler system
\[
\left\lbrace
\begin{aligned}
&\dot{a} = - \frac{a^2}{b^2} \\
&\dot{b} = \frac{a}{b} + 2 \\
&a(0) = b(0) = 1, \, \dot{a}(0) = -1, \, \dot{b}(0) = 3.
\end{aligned}
\right.
\]
In order to solve this system, we will find a conserved quantity and make a well-chosen change of variable. We begin by computing the derivative of $\frac{b}{a}$:
\[
\left( \frac{b}{a} \right)' = \frac{\dot{b} a - \dot{a} b}{a^2} = \frac{\left( \frac{a}{b} + 2 \right) a + \frac{a^2}{b^2}b}{a^2} = \frac{2}{b} + \frac{2}{a},
\]
and then,
\[
0 = \left( \frac{b}{a} \right)' - 2 \left( \frac{1}{b} + \frac{1}{a} \right) \Leftrightarrow 0 = a b \left( \frac{b}{a} \right)' - 2 (a + b).
\]
In addition, one has
\[
(a b)' = \dot{a} b + \dot{b} a = - \frac{a^2}{b^2} b + a \left( \frac{a}{b} + 2 \right) = 2 a,
\]
thus
\[
2 (a + b) = 2 a + 2 \frac{b}{a} a = 2 a \left( \frac{b}{a} + 1 \right) = (a b)' \left( \frac{b}{a} + 1 \right),
\]
and finally we have
\[
0 = a b \left( \frac{b}{a} \right)' - (a b)' \left( \frac{b}{a} + 1 \right) \Leftrightarrow \left( \frac{1}{a b} \left( \frac{b}{a} + 1 \right) \right)' = 0.
\]
We conclude that the quantity $\frac{1}{a b} \left( \frac{b}{a} + 1 \right)$ is constant, so
\begin{equation*}
\frac{1}{a b} \left( \frac{b}{a} + 1 \right) = 2.
\end{equation*}
A natural change of variable is to set $s = \phi(t) := a(t) b(t)$, so $\frac{b(t)}{a(t)} = 2 \phi(t) - 1= 2s-1$. Composing by $\phi^{-1}$ on the right, we obtain for $s$ in a neighborhood of $1$
\begin{align*}
&\dot{a} (\phi^{-1} (s)) = - \frac{a^2}{b^2} (\phi^{-1} (s)) \\
&\dot{b} (\phi^{-1} (s)) = \frac{a}{b} (\phi^{-1} (s)) + 2 \\
&a(\phi^{-1} (s)) b(\phi^{-1} (s)) =  \phi (\phi^{-1} (s)) \\
&\frac{b}{a}(\phi^{-1} (s)) = 2 \phi(\phi^{-1} (s)) - 1
\end{align*}
and setting $\alpha := a \circ \phi^{-1}$ and $\beta := b \circ \phi^{-1}$ one arrives at
\begin{align*}
&\dot{\alpha} = - (\phi^{-1})' \frac{\alpha^2}{\beta^2} = - (\phi^{-1})' \frac{1}{(2 s -1)^2} \\
&\dot{\beta} = (\phi^{-1})' \left( \frac{\alpha}{\beta} + 2 \right) = (\phi^{-1})'  \frac{4 s - 1}{2 s -1} \\
&\alpha \beta = s \\
&\frac{\beta}{\alpha} = 2 s - 1.
\end{align*}
Differentiating the last two equations one gets
\begin{equation} \label{eq1}
\dot{\alpha} \beta + \alpha \dot{\beta} = 1
\end{equation}
and
\begin{align*}
\alpha \dot{\beta} - \dot{\alpha} \beta = 2 \alpha^2 &\Leftrightarrow 2 \alpha = \dot{\beta} - \dot{\alpha} (2 s - 1) = (\phi^{-1})'  \frac{4 s - 1}{2 s -1} + (\phi^{-1})' \frac{1}{2 s -1} \\
&\Leftrightarrow  \alpha = (\phi^{-1})' \frac{2 s}{2 s -1}.
\end{align*}
Re-injecting this last equation in \eqref{eq1} we have
\begin{align*}
& \alpha \left( \dot{\alpha} \frac{\beta}{\alpha} + \dot{\beta} \right) = 1 \Leftrightarrow (\phi^{-1})' \frac{2 s}{2 s -1} \left( - (\phi^{-1})' \frac{1}{(2 s -1)^2} (2 s - 1)  + (\phi^{-1})'  \frac{4 s - 1}{2 s -1} \right) = 1 \\
\Leftrightarrow & \left((\phi^{-1})' \right)^2 \frac{4 s}{2 s -1} = 1 \Leftrightarrow (\phi^{-1})' = \sqrt{\frac{2 s -1}{4 s}},
\end{align*}
where we used that the derivative of $\phi$ is positive.
Thus, one has
\begin{align*}
\dot{\alpha} &= - (\phi^{-1})' \frac{1}{(2 s -1)^2} = - \frac{1}{2 \sqrt{s} (2 s -1)^\frac{3}{2}} \\
\dot{\beta} &= (\phi^{-1})'  \frac{4 s - 1}{2 s -1} = \frac{4 s - 1}{2 \sqrt{2 s^2 - s}}
\end{align*}
and by integration we finally obtain
\begin{align*}
\alpha = \sqrt{\frac{s}{2s - 1}}, && \beta = \sqrt{s (2 s - 1)}.
\end{align*}

We conclude that in terms of the new variable $s$, the metric on $S^3$ introduced in \eqref{metricgt} is
\begin{align*}
g_s = \frac{2 s -1}{4 s} \dd s^2 + \frac{s}{2s - 1} \eta_1^2 + s (2 s - 1) (\eta_2^2 + \eta_3^2),&&s \in (\tfrac{1}{2}, \infty).
\end{align*}
We can compute the interval to which the variable $t$ belongs by calculating
\begin{align*}
\int_\frac{1}{2}^1 (\phi^{-1})' (s) \dd s &= \int_\frac{1}{2}^1 \sqrt\frac{2s - 1}{4s} \dd s \\
&= \frac{1}{\sqrt{2}} \left[s \sqrt{1 - \frac{1}{2s}} - \frac{1}{4} \ln \left( 4s - 1 + \sqrt{16 s^2 - 8 s} \right) \right]_{\frac{1}{2}}^1 \\
&= \frac{1}{2} - \frac{1}{4 \sqrt{2}} \ln (3 + 2 \sqrt{2}) = \frac{\sqrt{2} -\ln (1 + \sqrt{2}) }{2\sqrt{2}} \approx 0,1884,
\end{align*}
and so the metric \eqref{metricgt} admitting a parallel spinor exists for $t \in \left( \frac{1}{2} \left( \frac{1}{\sqrt{2}} \ln (1 + \sqrt{2}) - 1 \right), \infty \right)$.

\subsection{Link with the family of Euclidean Taub-NUT metrics}
Let us first extend the previous study to the case where the initial sphere has radius $r > 0$. It is easy to see that this does not change the form of the endomorphism $A_0$ defined in \eqref{defA0}. This modification results in the rescaling of the metric by a factor $r^2$. Subsequently, if we keep the same notations as in the case $r = 1$, the metric $g$ on $\cZ$ is given by
\begin{align} \label{eq40}
g = \frac{2 s -1}{4 s} r^2 \dd s^2 + \frac{r^2 s}{2s - 1} \eta_1^2 + r^2 s (2 s - 1) (\eta_2^2 + \eta_3^2),&& s\in (\tfrac{1}{2},\infty).
\end{align}

With the change of variable $u := r s$ we can express this metric by
\begin{align*}
g = \frac{2 u - r}{4 u} \dd u^2 + \frac{r^2 u}{2 u - r} \eta_1^2 + u (2 u - r) (\eta_2^2 + \eta_3^2),&& u\in(\tfrac{r}{2}, \infty).
\end{align*}

This family of metrics is strikingly similar to the well-known family of Euclidean Taub-NUT metrics on $\RR^4$ (see e.g.\ \cite{MM} and the references therein). 
In polar coordinates, the Euclidean Taub-NUT metrics are given (up to a constant) by the expression
\begin{equation}\label{gTN}
g_{TN} = \frac{a s + b}{s} \left( \dd s^2 + \frac{4 b^2 s^2}{(a s + b)^2} \eta_1^2 + 4 s^2 (\eta_2^2 + \eta_3^2) \right)
\end{equation}
where $a$ and $b$ are \emph{positive} parameters and $\eta_1,\eta_2,\eta_3$ are the $1$-forms dual to the Hopf vector fields $e_1,e_2,e_3$. 
Through a change of variable in the radial variable $s$, we can always normalize the parameter $a$ to be equal to $2$.

The metric we found in Equation \eqref{eq40} belongs formally to the extension for \emph{negative} values of the parameter $b = - r<0$ of the family of Taub-NUT metrics normalized with $a = 2$. Note that the parameter $r$ cannot vanish, else the metric degenerates. The metrics in this family admit a nonzero parallel spinor. This implies that they are hyperk\"ahler, hence Ricci-flat, and also (anti-) self-dual, according to the chirality of the nonzero parallel spinor.

In the presentation \eqref{gTN} of the Taub-NUT metric, $s=0$ is an apparent singularity, but in fact the metric extends smoothly in the origin of $\RR^4$. In contrast, the metric \eqref{eq40} has a true singularity at $s=\frac{1}{2}$. The horizontal directions $e_2$ and $e_3$ collapse, while the vertical direction of $e_1$ \emph{explodes} in finite time as $s\searrow\frac{1}{2}$. As a result, the curvature operator is unbounded near $s=\frac{1}{2}$.

\section{Classification results on $\SS^3$}

In this section we analyze the set of symmetric solutions of \eqref{introendcond} in the case $M = \SS^3$. Recall that $\cC_{\SS^3}=\cCloc_{\SS^3}$ because the sphere is simply-connected. Since the known examples of solutions can be expressed as constant matrices in a frame of left (or right-) invariant vector fields, we will investigate some classes of endomorphisms related to these vector fields.

\subsection{Endomorphisms constant in a left or right invariant orthonormal frame} \label{6.1}
The four examples we recalled in Remark~\ref{exEnd} can be interpreted as constant matrices either in a left- or a right-invariant orthonormal frame. For this reason, it is legitimate to search for all symmetric endomorphisms in $\cC_{\SS^3}$ that verify this property. We shall prove that there exist no other examples besides the ones already known from Remark~\ref{exEnd}.

\begin{prop} \label{constantA}
Let $A \in \cC_{\SS^3}$. Assume that $A$ is constant in a left (resp. right)-invariant orthonormal frame. Then, either $A = \pm \mathrm{Id}$ or $A$ has eigenvalues $1$, $-3$, $-3$ (resp. $-1$, $3$, $3$). In particular, $A$ is one of the endomorphism fields described in Remark~\ref{exEnd}.
\end{prop}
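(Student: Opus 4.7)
My approach would be to exploit the freedom to rotate the frame in order to diagonalize $A$, and then to reduce equation \eqref{endsphere} to an elementary polynomial system in three unknowns. Because the cross product on $\mathrm{Im}\,\HH \simeq \mathfrak{su}(2)$ is $\mathrm{SO}(3)$-equivariant, any $\mathrm{SO}(3)$-rotation of a left-invariant orthonormal frame $(e_1,e_2,e_3)$ satisfying \eqref{crej} is again a left-invariant orthonormal frame obeying the same bracket relations. Since the matrix of $A$ in the original frame is constant and symmetric, I may diagonalize it by such a rotation, so without loss of generality
\begin{equation*}
A = \lambda_1\, e_1 \otimes e_1 + \lambda_2\, e_2 \otimes e_2 + \lambda_3\, e_3 \otimes e_3
\end{equation*}
for real constants $\lambda_1, \lambda_2, \lambda_3$.

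Under this normalization a direct application of \eqref{covE} gives, for every even permutation $(i,j,k)$ of $(1,2,3)$,
\begin{equation*}
d^\nabla A(e_i, e_j) = (\lambda_i + \lambda_j - 2\lambda_k)\, e_k,
\end{equation*}
while $A(e_i)\wedge A(e_j) = \lambda_i\lambda_j\, e_i\wedge e_j$ and $* e_k = e_i \wedge e_j$. Equation \eqref{endsphere} then collapses to the scalar system
\begin{equation*}
\lambda_i + \lambda_j - 2\lambda_k = 1 - \lambda_i \lambda_j, \qquad (i,j,k) \text{ an even permutation of } (1,2,3).
\end{equation*}
Subtracting these three equations pairwise, the linear and quadratic parts combine to yield the factorization
\begin{equation*}
(\lambda_i - \lambda_j)(\lambda_k + 3) = 0, \qquad \{i,j,k\} = \{1,2,3\}.
\end{equation*}

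The rest is a routine case analysis. If all three eigenvalues coincide, the scalar system reduces to $\lambda^2 = 1$ and $A = \pm\mathrm{Id}$. If exactly two are equal, say $\lambda_2 = \lambda_3 \neq \lambda_1$, two of the factor equations force $\lambda_2 = \lambda_3 = -3$, after which the original system gives $\lambda_1 = 1$, hence eigenvalues $(1,-3,-3)$. Three pairwise distinct eigenvalues would force $\lambda_1 = \lambda_2 = \lambda_3 = -3$, a contradiction. The right-invariant case is strictly analogous: replacing \eqref{crej} by $[e_a, e_b] = -2e_c$ flips the signs of all covariant derivatives in \eqref{covE} and hence of $d^\nabla A$, so the scalar system becomes $\lambda_i + \lambda_j - 2\lambda_k = \lambda_i\lambda_j - 1$, which factors as $(\lambda_i - \lambda_j)(\lambda_k - 3) = 0$ and yields either $A = \pm\mathrm{Id}$ or eigenvalues $(-1,3,3)$.

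I do not expect any serious obstacle in the argument: once one notices that $\mathrm{SO}(3)$-equivariance allows diagonalization of $A$ while staying in the class of left-invariant orthonormal frames, the remainder is bookkeeping. The most delicate point is simply to verify that an $\mathrm{SO}(3)$ (rather than $\mathrm{O}(3)$) rotation suffices --- an $\mathrm{O}(3)\setminus\mathrm{SO}(3)$ change of basis would flip the sign of the structure constants and convert a left-invariant frame into a right-invariant one --- but this is automatic since any real symmetric $3\times 3$ matrix can be diagonalized by an element of $\mathrm{SO}(3)$.
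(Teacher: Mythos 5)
Your proposal is correct and follows essentially the same path as the paper: diagonalize $A$ in a (still left-invariant) orthonormal frame and reduce \eqref{endsphere} to the cyclic scalar system $\lambda_i+\lambda_j-2\lambda_k=1-\lambda_i\lambda_j$. The only difference is the algebraic finish: you subtract the equations pairwise to obtain the factorizations $(\lambda_i-\lambda_j)(\lambda_k+3)=0$ and run a case analysis, whereas the paper rewrites each equation as $(\lambda_i+1)(\lambda_j+1)=2(\lambda_k+1)$ and multiplies all three to get $(\lambda_i+1)^2=4$ directly; both are valid, and your explicit justification that an $\mathrm{SO}(3)$-rotation (not merely $\mathrm{O}(3)$) preserves the structure constants \eqref{crej} is a welcome precision that the paper leaves implicit under ``up to an isometry of the sphere.''
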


\begin{proof} We recall that $(e_1, e_2, e_3)$ are the three left-invariant vector fields corresponding to the quaternions $i,j,k$ on $\SS^3$.

Let $A \in \cC_{\SS^3}$ (i.e., $A$ is symmetric and satisfies Equation \eqref{endsphere} on $\SS^3$), and assume that $A$ is constant in a left-invariant orthonormal frame. Hence, $A$ can be viewed as a real symmetric $3 \times 3$ matrix, in particular it is diagonalizable. From these considerations, up to an isometry of the sphere we can assume without loss of generality that $A = a e_1 \otimes e_1 + b e_2 \otimes e_2 + c e_3 \otimes e_3$ for $a,b,c \in \RR$.

Equation \eqref{endsphere} applied to $X,Y \in \{e_1, e_2, e_3 \}$ together with \eqref{extcovder} and \eqref{covE} give the system
\begin{equation*}
\left\lbrace
\begin{aligned}
& a + b - 2 c = 1 - a b \\
& b + c - 2 a = 1 - b c \\
& c + a - 2 b = 1 - c a
\end{aligned}
\right.
\Leftrightarrow
\left\lbrace
\begin{aligned}
& (a+1) (b+1) = 2(c+1) \\
& (b+1) (c+1) = 2(a+1) \\
& (a+1) (c+1) = 2(b+1).
\end{aligned}
\right.
\end{equation*}
We easily see that $a+1 = 0 \Leftrightarrow b+1 = 0 \Leftrightarrow c+1 = 0$, and in this case $A = - \mathrm{Id}$.
We assume now that $a+1 \neq 0$. The product of all the equations give
\[
(a+1) (b+1) (c+1) = 8,
\]
and we conclude that $(a+1)^2 = (b+1)^2 = (c+1)^2 = 4$. Therefore $a+1,b+1,c+1\in\{-2,2\}$ and moreover an even number among them are negative, concluding the proof when $A$ is constant in a left-invariant frame.

The case of a right-invariant orthonormal frame is treated similarly and produces the additional solution $-1$, $3$, $3$.
\end{proof}

\subsection{Endomorphism fields with three distinct constant eigenvalues} The case of an endomorphism solution of \eqref{endsphere} with at most two distinct eigenvalues was already studied in \cite{MS3}, where it was shown that the only possibilities are the ones given in Remark~\ref{exEnd}. 

\begin{prop} \label{classification2}
There is no element of $\cC_{\SS^3}$ with three distinct constant eigenvalues.
\end{prop}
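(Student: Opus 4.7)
The plan is to extract enough structure from the flatness equation \eqref{endsphere} to conclude that an eigenframe of $A$ consists of unit Killing, hence Hopf, vector fields, and then to invoke Proposition~\ref{constantA} for a contradiction.

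Since the three eigenvalues are constant and distinct, the eigenspaces of $A$ form three smooth pairwise orthogonal line subbundles of $T\SS^3$. As $\SS^3$ is simply connected each is trivial, so I fix a positively oriented global orthonormal eigenframe $(f_1,f_2,f_3)$ with $Af_i=\lambda_i f_i$. Writing $\nabla f_j=\omega_j^k\otimes f_k$ with $\omega_j^k=-\omega_k^j$, I will examine \eqref{endsphere} componentwise. For $(i,j,k)$ a cyclic permutation of $(1,2,3)$, \eqref{endsphere} reads $d^\nabla A(f_i,f_j)=(1-\lambda_i\lambda_j)f_k$; its $f_i$- and $f_j$-components vanish, and since $\lambda_i\neq\lambda_j$ this forces each off-diagonal connection form to be a multiple of the dual coframe vector indexed by the remaining direction:
\begin{align*}
\omega_1^2=a\,f^3,&&\omega_1^3=b\,f^2,&&\omega_2^3=c\,f^1,
\end{align*}
for smooth functions $a,b,c$ on $\SS^3$. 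From these formulas I immediately read off $\nabla_{f_i}f_i=0$ (so each $f_i$ is a unit geodesic vector field) together with
$[f_2,f_3]=(a-b)f_1,\ [f_3,f_1]=(a+c)f_2,\ [f_1,f_2]=(c-b)f_3,$
while the $f_k$-components of \eqref{endsphere} yield three linear relations among $a,b,c$ with constant coefficients and constant right-hand sides, any two of which are independent modulo the Gauss identity $\lambda_1\lambda_2+\lambda_1\lambda_3+\lambda_2\lambda_3=3$.

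Next I plan to show that $a,b,c$ are constants and then pin down their values. Solving two of the linear relations expresses $b$ and $c$ as affine functions of $a$ with nonzero leading coefficients (using distinctness of eigenvalues). The Jacobi identity applied to the three brackets above forces $f_1(a-b)=f_2(a+c)=f_3(c-b)=0$; combined with the affine dependencies this implies $f_i(a)=0$ for every $i$, so $a,b,c$ are constants. Computing $R(f_i,f_j)f_i$ directly from the formulas for $\nabla_{f_i}f_j$ and equating with the round-sphere curvature $R(X,Y)Z=\langle Y,Z\rangle X-\langle X,Z\rangle Y$ yields the three polynomial identities $bc+ac-ab=1$, $ab+ac-bc=1$ and $ab+bc+ac=-1$. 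Their unique real solutions are $(a,b,c)=\pm(1,-1,1)$, and in both cases a direct inspection of the matrices of $\nabla f_i$ shows they are skew-symmetric, so each $f_i$ is a unit Killing vector field on the round $\SS^3$.

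To finish, I will invoke the characterization of Hopf fields on $\SS^3$: the unit Killing vector fields are exactly the left- or right-invariant ones, i.e., the Hopf fields. A short computation with $\langle L_v,R_u\rangle_p=\langle v,\mathrm{Ad}_{p^{-1}}u\rangle$ shows that this inner product varies non-trivially in $p$ unless $v$ or $u$ is zero, so any three mutually orthogonal Hopf vector fields must be all left-invariant or all right-invariant. Hence $A$ is constant in the left- or right-invariant orthonormal frame $(f_1,f_2,f_3)$, contradicting Proposition~\ref{constantA} since none of the endomorphisms listed there has three distinct eigenvalues. The main obstacle is the middle paragraph: upgrading $a,b,c$ from smooth functions to constants and then using the ambient curvature to pin down their values, which is what lets the $f_i$ be recognized as Killing and not merely geodesic.
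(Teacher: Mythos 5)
Your proof is correct, but it departs from the paper's argument after the common first step of showing that the eigenframe $(f_1,f_2,f_3)$ consists of unit geodesic vector fields with $[f_a,f_b]\in\Span(f_c)$. The paper at that point \emph{forgets} the endomorphism $A$ entirely and proves a self-contained lemma: any global orthonormal frame of geodesic vector fields on the round $\SS^3$ must be left- or right-invariant. The lemma is proved by writing $\nabla_{X_a}X_b=\beta_a X_c$ (a priori for functions $\beta_k$), computing $\R(X_a,X_b)X_b=X_a$ and projecting onto $X_a$; this kills all derivative terms and yields the purely quadratic identities $-\beta_a\beta_b+\beta_a\beta_c+\beta_b\beta_c=1$, hence $\beta_k\equiv\pm 1$, and the frame is then matched with a left- or right-invariant one via the modified connection $\nabla-*(\cdot\wedge\cdot)$. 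You, instead, extract \emph{more} information from \eqref{endsphere}: the three linear relations for the connection coefficients $a,b,c$ with constant coefficients, and combine them with the Jacobi identity to force $a,b,c$ to be constant (here the distinctness of the $\lambda_i$ is used to ensure the affine relations have nonzero slopes). Only then do you invoke the curvature of $\SS^3$ to pin down $(a,b,c)=\pm(1,-1,1)$, deduce that the $\nabla f_i$ are skew-symmetric, hence that the $f_i$ are unit Killing, and finally appeal to the classification of unit Killing fields on $\SS^3$ (a classical fact which you sketch via $\langle L_v,R_u\rangle_p=\langle v,\mathrm{Ad}_{p^{-1}}u\rangle$). Both routes end with Proposition~\ref{constantA}. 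The paper's approach is more economical and produces a lemma of independent interest (avoiding the reference to Gluck--Gu); your route is more computational but also valid, trading the parallel-transport identification for the (equally classical) fact that constant-length Killing fields on $\SS^3$ are precisely the left- or right-invariant ones. One small stylistic remark: once you know $(a,b,c)=\pm(1,-1,1)$, you could also substitute directly into your three linear relations and recover exactly the polynomial system from the proof of Proposition~\ref{constantA} in the eigenvalues $\lambda_i$, giving a slightly shorter endgame without going through the Killing/Hopf characterisation.
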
 
\begin{proof}
Let $A \in \cC_{\SS^3}$ with three constant eigenvalues $\lambda_1 < \lambda_2 < \lambda_3$. The associated unitary eigenvectors are global vector fields on $\SS^3$, which form an orthonormal frame, and are denoted by $X_1, X_2, X_3$. Equation~\eqref{endsphere} means that for every cyclic permutation $(a,b,c)$ of the index set $(1,2,3)$ one has
\begin{equation*}
\lambda_b \nabla_{X_a} X_b - \lambda_a \nabla_{X_b} X_a - A [X_a, X_b] = (1 - \lambda_a \lambda_b)X_c .
\end{equation*}
Projecting on $X_a$, we see that
\begin{align*}
&\lambda_b g( \nabla_{X_a} X_b, X_a) - \lambda_a g (\nabla_{X_b} X_a, X_a) - \lambda_a g([X_a, X_b], X_a) = 0 \\
\Leftrightarrow & (\lambda_b - \lambda_a) g(\nabla_{X_a} X_b, X_a) = 0 \\
\Leftrightarrow & (\lambda_b - \lambda_a) g([X_a,X_b], X_a) = 0.
\end{align*}
This last equation is true for any $a, b \in \{1,2,3 \}$, and this means $[X_a,X_b] \in \Span (X_c)$.

As a direct consequence of Koszul formula, $\nabla_{X_a} X_a = 0$ for $a \in \{1,2,3 \}$. We can compute for any $a$
\[
\delta (X_a) = - X_k \lrcorner \nabla_{X_k} X_a = g( X_a, \nabla_{X_k} X_k) = 0.
\]
This shows that the vector fields $X_k$ are geodesic and divergence free. By a result of Gluck and Gu \cite[Theorem A]{GG}, every geodesic and divergence free vector field on $\SS^3$ is a Hopf vector field (i.e. a unit vector field tangent to the fiber of a Hopf fibration). Moreover, since they form an orthonormal basis at any point, they are all either left or right-invariant. However we can give a simpler argument in our case:

\begin{lemma}
Let $(X_1, X_2, X_3)$ be a global orthonormal frame of geodesic vector fields on $\SS^3$ (i.e. $\nabla_{X_k} X_k = 0$). Then $(X_1, X_2, X_3)$ is 
either a left- or a right-invariant frame.
\end{lemma}
\begin{proof}
Using Koszul's formula, one sees that the assumption $\nabla_{X_k} X_k = 0$ is actually equivalent to the existence of three real functions $\alpha_1, \alpha_2, \alpha_3$ on $\SS^3$ such that $[X_a, X_b] = \alpha_c X_c$ for any cyclic permutation $(a,b,c)$ of the indices $1,2,3$.

We define the real-valued functions $\beta_k = (-1)^{\delta_1^k} \alpha_1 + (-1)^{\delta_2^k} \alpha_2 + (-1)^{\delta_3^k} \alpha_3$. By the Koszul formula, 
\begin{align*}
\nabla_{X_a} X_b = \beta_a X_c, && \nabla_{X_a} X_c = - \beta_a X_b,
\end{align*}
Moreover, the curvature tensor on the sphere satisfies $\R (X_a, X_b) X_b = X_a$. Since the vector fields $X_1,X_2,X_3$ are geodesic, we also have
\begin{align*}
\R (X_a, X_b) X_b &= -\nabla_{X_b} \nabla_{X_a} X_b - \nabla_{[X_a, X_b]} X_b \\
&= - X_b (\beta_a) X_b - \beta_a \beta_b X_a + 2 \alpha_c \beta_c X_a \\
&= - X_b (\beta_a) X_b + (- \beta_a \beta_b + \beta_a \beta_c + \beta_b \beta_c) X_a,
\end{align*}
so the projection of this equation on $X_a$ yields
\[
- \beta_a \beta_b + \beta_a \beta_c + \beta_b \beta_c =1.
\]
Since this is true for any value of $(a, b, c)$ in $\{ (1,2,3), (2,3,1), (3,1,2) \}$, one has $\beta_a \beta_b = 1$ and we conclude that $\beta_k = \pm 1$ for any $k \in \{1,2,3 \}$.

Assume first that $\beta_k = 1$ for any $k \in \{1,2,3 \}$. We define for any $X,Y \in T \SS^3$ the covariant derivative
\begin{equation*}
\overline \nabla_X Y := \nabla_X Y - * (X \wedge Y),
\end{equation*}
which was already considered in \eqref{modifiedconnection}. The vector fields $X_k$ are parallel for $\overline \nabla$ and so are the left-invariant vector fields with value $(X_k)_e$ at $e$. Thus, these vector fields coincide.

In the case $\beta_1=\beta_2=\beta_3 = -1$, the same proof shows that $X_1, X_2, X_3$ are right-invariant.
\end{proof}

Hence, $A$ is constant in a left or right-orthonormal frame and according to Proposition~\eqref{constantA} it must have at most 2 different eigenvalues, which contradicts the hypothesis.
\end{proof}

\subsection{Endomorphisms constant in the direction of a left-invariant vector field} We will now weaken the condition from Section \ref{6.1}, and search for solutions $A$ of \eqref{endsphere} on $\SS^3$ that are constant in the direction of a left-invariant vector field $\xi$, i.e. $\mathcal L_\xi A = 0$. Assuming this invariance, all the objects can be expressed on the basis of the Hopf fibration with fibers tangent to $\xi$.
We decompose $A$ under the form:
\begin{equation}\label{Ades}
A = f \xi \otimes \xi + v \otimes \xi + \xi \otimes v + B
\end{equation}
where $f$ is a function on $\SS^3$, $v \in \xi^\perp$ and $B$ is the restriction of $A$ to $\xi^\perp$. The condition $\mathcal L_{\xi} A = 0$ gives
\begin{align*}
0 = \mathcal L_{\xi} A = (\xi f) \xi \otimes \xi + \mathcal L_{\xi} v \otimes \xi + \xi \otimes \mathcal L_{\xi} v + \mathcal L_{\xi} B,
\end{align*}
and we know that for all $X \in \xi^\perp$, $\mathcal L_{\xi} X \in \xi^\perp$ since $\xi$ is a Killing field, so we deduce
\begin{align} \label{liesys}
\xi f = 0, && \mathcal L_{\xi} v = 0, && \mathcal L_{\xi} B = 0.
\end{align}
As a direct consequence of equations \eqref{liesys} we can interpret $f$, $v$ and $B$ respectively as a function, a vector and an endomorphism on the basis $\SS^2(\frac 1 2)$ of the Hopf fibration.

We define the endomorphism $J $ of $\xi^\perp$ by $J X := - \nabla_X \xi$. This endomorphism is skew-symmetric and satisfies $J^2 = -1$; it is actually the lift of the standard almost complex structure from $\SS^2(\frac{1}{2})$ through the Hopf fibration, so we will see it as an endomorphism of the base.

The invariance equations \eqref{liesys} give
\begin{equation*}
\begin{aligned}
\nabla_{\xi} v &= \nabla_v \xi = - J v \\
(\nabla_{\xi} B) X &= \nabla_{\xi} (B X) - B \nabla_{\xi} X  = \nabla_{B X} \xi + [\xi, B X] + B J X - B [\xi, X] \\
&= [B,J] X + \mathcal L_{\xi} (B X) - B \mathcal L_{\xi} X = [B,J] X + (\mathcal L_{\xi} B) X = [B,J] X.
\end{aligned}
\end{equation*}
Now, we express Equation \eqref{endsphere} in terms of $f$, $v$ and $B$ by considering horizontal and vertical vectors for $X$ and $Y$.

Let $X,Y$ be two orthogonal vector fields in $\xi^\perp$. One has,
\begin{align*}
d^\nabla A(\xi,X) &= (\nabla_{\xi} A) X - (\nabla_X A) \xi \\
&= - (J v \otimes \xi + \xi \otimes J v - [B,J]) X - \nabla_X (A \xi) - A J X \\
&= - g(J v, X) \xi - J B X + B J X - \nabla_X (f \xi + v) - g (v , J X) \xi - B J X \\
&= - J B X - \nabla_X (f \xi + v).
\end{align*}
We now use the fact that for any $X$, $g(J X, X) = 0$ to infer that $* \xi \wedge X = J X$ and $* v \wedge X = - g( v, J X) \xi$, so Equation \eqref{endsphere} implies
\begin{align*}
- J B X - \nabla_X (f \xi + v) &= * \xi \wedge X - * (f \xi + v) \wedge (g(X,v) \xi + B X) \\
&= J X - f J B X + g (v, J B X) \xi + g (X, v) J v.
\end{align*}
Projecting this equation on $\xi$, one has
\[
- g (\nabla_X (f \xi + v), \xi) = g(v, J B X)
\]
thus
\[
g (B J v + J v - d f, X) = 0.
\]
Since this last equation is true for any $X \in \xi^\perp$ we conclude
\begin{equation} \label{eq47}
B J v = - J v + d f.
\end{equation}
We define the orthogonal projector $P$ on $\xi^\perp$. We now project Equation \eqref{eq47} on the orthogonal of $\xi$, and we obtain
\[
- J B X + f J X - P \nabla_X v = J X - f J B X + g (X, v) J v \]
thus
\[
(f - 1)(J B X + J X) = P \nabla_X v + g (X, v) J v.
\]

Thus, we have the system
\begin{equation*}
\left\lbrace
\begin{aligned}
&B J v = - J v + d f \\
&(f - 1)(J B X + J X) = P \nabla_X v + g (X, v) J v
\end{aligned}
\right.
\end{equation*}

We now compute:
\begin{align*}
(\nabla_X A) Y =& (X(f) \xi \otimes \xi - f J X \otimes \xi - f \xi \otimes J X + \nabla_X v \otimes  \xi \\
&+ \xi \otimes \nabla_X v - J X \otimes v - v \otimes J X + (\nabla_X B)) Y \\
=& - f g (J X, Y) \xi + g (\nabla_X v, Y) \xi - g(J X, Y) v \\
&- g(v, Y) J X + (\nabla_X B) Y.
\end{align*}
We deduce that
\begin{align*}
d^\nabla A (X, Y) =& - f g (J X, Y) \xi + g (\nabla_X v, Y) \xi - g(J X, Y) v - g(v, Y) J X + (\nabla_X B) Y \\
&+ f g (J Y, X) \xi - g (\nabla_Y v, X) \xi + g(J Y, X) v + g(v, X) J Y - (\nabla_Y B) X \\
=& 2 f g (X, J Y) \xi + d v (X,Y) \xi + 2 g(X, J Y) v \\
&- g(v, Y) J X + g(v, X) J Y + d^\nabla B (X,Y).
\end{align*}
Equation \eqref{endsphere} leads to
\begin{align*}
&2 f g (X, J Y) \xi + d v (X,Y) \xi + 2 g(X, J Y) v - g(v, Y) J X + g(v, X) J Y + d^\nabla B (X,Y) \\
=& * X \wedge Y - * (g(X,v) \xi + B X) \wedge (g(Y,v) \xi + B Y) \\
=& - [g (X,J Y) - g (B X, J B Y)] \xi - g(X,v) J B Y + g(Y,v) J B X
\end{align*}
which leads to
\begin{align*}
[- g (X,J Y) + g (B X, J B Y) - 2 f g (X, J Y) - d v (X,Y)] \xi - 2 g(X, J Y) v \\
= d^\nabla B (X,Y) + g(X,v) J (B + 1) Y - g(Y,v) J (B + 1) X.
\end{align*}
Since
\begin{align*}
g (d^\nabla B (X,Y), \xi) &= g (\nabla_X (B Y) - \nabla_Y (B X) - B [X, Y], \xi) \\
&= g (\nabla_X (B Y), \xi) - g(\nabla_Y (B X), \xi) \\
&= - g (B X, J Y) + g(B Y, J X),
\end{align*}
the projection on $\xi$ gives
\[
- g (X,J Y) + g (B X, J B Y) - 2 f g (X, J Y) - d v (X,Y) = - g (B X, J Y) + g(B Y, J X)
\]
and
\[
2 (1+ f) g (X, J Y) + d v (X,Y) = g ((B+1) X, J (B+1) Y).
\]
We remark that for any symmetric endomorphism $S$, one has $S J S = \det (S) J$ and the last equation is rewritten
\[
[2 (1 + f) - \det (B + 1)] g (X, J Y) = - d v (X,Y).
\]
Moreover, the projection on $\xi^\perp$ provides the equation
\begin{align*}
- P d^\nabla B (X,Y) = g(X,v) J (B + 1) Y - g(Y,v) J (B + 1) X + 2 g(X, J Y) v.
\end{align*}
For the remainder of this section, we will denote by $\overline \nabla$ the covariant derivative on $\SS^2 (\frac{1}{2})$ (the basis of the Hopf fibration). We recall that if $U,V$ are basic vector fields on $\SS^3$ we have the equation
\begin{equation*}
\nabla_U V = \overline \nabla_U V + g (V, J U) \xi.
\end{equation*}
Now, we study the objects on $\SS^2(\frac{1}{2})$. For any $X,Y \in T \SS^2(\frac{1}{2})$, one has $d v (X,Y) = d^* (J v) g(J X, Y)$ and, using a unit vector field $X$ at a point of $\SS^2(\frac{1}{2})$ and the fact that $J$ is parallel, one has
\[
* d^{\overline \nabla} B = d^{\overline \nabla} B (X,J X) = (\overline \nabla_X B) J X - (\overline \nabla_{J X} B) X = (\overline \nabla_X B J) X + (\overline \nabla_{J X} B J) J X = - \delta^{\overline \nabla} (B J).
\]
Consequently, we obtain four equations on the sphere $\SS^2(\frac{1}{2})$
\begin{equation}\label{sd}
\left\lbrace
\begin{aligned}
&(B +1 ) J v = d f \\
&(f - 1) J (B + 1) = \overline \nabla v + v \otimes J v \\
&2 (1 + f) - \det (B + 1) = d^* (J v) \\
&\delta^{\overline \nabla} (B J) = J (B + 3) J v.
\end{aligned}
\right.
\end{equation}

The system \eqref{sd} seems too difficult to solve in full generality for the time being, and is left as an open problem. 
\subsection{A particular case: $v=0$}
We proceed by treating only the special case in the system \eqref{sd} where $\xi$ is an eigenvector of $A$, i.e.\ $v = 0$ in \eqref{Ades}. In this situation, the system reduces to
\begin{equation} \label{sysend}
\left\lbrace
\begin{aligned}
&(f - 1) J (B + 1) X = 0 \\
&[2 (1 + f) - \det (B + 1)] = 0 \\
&\delta^{\overline \nabla} (B J) = 0.
\end{aligned}
\right.
\end{equation}

\begin{prop} \label{classification1}
Let $A \in \cC_{\SS^3}$ be the symmetric endomorphism corresponding to a Cauchy spinor on $\SS^3$. Assume that there exists a left (resp. right)-invariant vector field $\xi$ such that $\mathcal L_\xi A = 0$ and such that $\xi$ is an eigenvector of $A$. Then $A = \pm \mathrm{Id}$ or $A = \xi \otimes \xi - 3 P$ (resp. $A = - \xi \otimes \xi + 3 P$), where $P$ is the orthogonal projector on $\xi^\perp$.
\end{prop}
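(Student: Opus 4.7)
The plan is to reduce the system \eqref{sysend} to a Liebmann-type rigidity statement for Codazzi tensors on the base $\SS^2(\frac{1}{2})$ of the Hopf fibration. Since $J$ is invertible on $\xi^\perp$, the first equation of \eqref{sysend} amounts to $(f-1)(B+\mathrm{Id}) = 0$, yielding at each point of the base the dichotomy $f = 1$ or $B = -\mathrm{Id}$. Combined with the second equation, on the open set $U := \{B \neq -\mathrm{Id}\}$ one has $f \equiv 1$ and $\det(B+\mathrm{Id}) \equiv 4$, while on its complement $\{B = -\mathrm{Id}\}$ the second equation forces $f = -1$. Continuity of $f$ on the connected base then rules out both sets being non-empty, so either $U = \emptyset$ (in which case $A = -\mathrm{Id}$ and we are done), or $f \equiv 1$ and $\det(B+\mathrm{Id}) \equiv 4$ on all of $\SS^2(\frac{1}{2})$.

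In the remaining case, the plan is to reinterpret the third equation as a Codazzi condition. In an orthonormal frame $(X_1, X_2 = JX_1)$ of the base, using that $J$ is parallel as the K\"ahler structure of $\SS^2(\frac{1}{2})$, a short computation gives
\begin{equation*}
\delta^{\overline\nabla}(BJ) = -d^{\overline\nabla}B(X_1, X_2),
\end{equation*}
so the third equation is equivalent to $d^{\overline\nabla}B = 0$. Setting $\Tilde B := B + \mathrm{Id}$, one is left with a symmetric Codazzi tensor $\Tilde B$ on $\SS^2(\frac{1}{2})$ with $\det \Tilde B = 4$ on a surface of Gauss curvature $K = 4$. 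Showing $\Tilde B = \pm 2\,\mathrm{Id}$ will then yield $B = \mathrm{Id}$ (hence $A = \mathrm{Id}$) or $B = -3\,\mathrm{Id}$ (hence $A = \xi\otimes\xi - 3P$), completing the proof in the left-invariant case; the right-invariant case follows by the same argument up to orientation, producing the symmetric conclusion $A = -\xi\otimes\xi + 3P$.

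The final step is a Liebmann-style maximum principle. The ordered eigenvalues $\mu_+ \geq \mu_-$ of $\Tilde B$ are continuous and satisfy $\mu_+\mu_- = 4$, hence share a common sign and $|\mu_\pm| \geq 2$. Suppose toward contradiction that $\mu_+ > \mu_-$ somewhere, and pick $p$ a maximum of $\mu_+ - \mu_-$ on the compact base. Since $x \mapsto x - 4/x$ is strictly increasing on each of $(0,\infty)$ and $(-\infty,0)$, the point $p$ is simultaneously a maximum of $\mu_+$ and a minimum of $\mu_-$. Near $p$ the eigenframe $(X_1, X_2)$ is smooth; writing $\overline\nabla_{X_1}X_1 = a_1 X_2$ and $\overline\nabla_{X_2}X_2 = a_2 X_1$, the Codazzi identity $(\overline\nabla_{X_1}\Tilde B)X_2 = (\overline\nabla_{X_2}\Tilde B)X_1$ gives the first-order relations
\begin{equation*}
X_2(\mu_+) = (\mu_+ - \mu_-)\,a_1, \qquad X_1(\mu_-) = -(\mu_+ - \mu_-)\,a_2,
\end{equation*}
which at $p$ (where $d\mu_\pm(p)=0$ and $\mu_+(p)>\mu_-(p)$) force $a_1(p) = a_2(p) = 0$. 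Differentiating these identities once more at $p$ and using the Hessian inequalities $\mathrm{Hess}\,\mu_+(p) \leq 0$ and $\mathrm{Hess}\,\mu_-(p) \geq 0$ yields $X_2(a_1)(p) \leq 0$ and $X_1(a_2)(p) \leq 0$. The classical $2$D expression
\begin{equation*}
K = X_1(a_2) + X_2(a_1) - a_1^2 - a_2^2
\end{equation*}
for the Gauss curvature then produces $K(p) \leq 0$, contradicting $K = 4$, so $\mu_+ \equiv \mu_-$. The main obstacle is this last step: the second-order Codazzi analysis at the extremum of $\mu_+ - \mu_-$ requires careful sign tracking to extract both Hessian inequalities in the correct directions, and the curvature formula must be set up so that the sum of nonpositive terms yields the desired contradiction with $K > 0$.
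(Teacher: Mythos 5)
Your reduction of the system \eqref{sysend} to a Codazzi-tensor rigidity statement on the base of the Hopf fibration is correct and coincides with the paper's opening moves: the open-and-closed dichotomy between $\{B=-\mathrm{Id}\}$ and its complement (giving either $A=-\mathrm{Id}$, or $f\equiv 1$ and $\det(B+\mathrm{Id})\equiv 4$ globally), and the identity $\delta^{\overline\nabla}(BJ)=-d^{\overline\nabla}B(X_1,JX_1)$ converting the third equation into the Codazzi condition --- which is precisely the lemma the paper records at the end of the section. Where you genuinely diverge is in the rigidity step. You pick a smooth eigenframe of $\tilde B=B+\mathrm{Id}$ near an extremum $p$ of $\mu_+-\mu_-$ (smoothness is automatic there since $\mu_+(p)>\mu_-(p)$), extract $a_1(p)=a_2(p)=0$ and the second-order inequalities $X_2(a_1)(p)\leq 0$, $X_1(a_2)(p)\leq 0$ from the Codazzi relations, and contradict $K=4>0$ via the moving-frame Gauss curvature formula; I have verified these sign computations and they go through exactly as you outline. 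The paper instead normalizes to $C=\tfrac{B+\mathrm{Id}}{2}$, passes to $U=JCJ$ so that the Codazzi condition becomes $\delta^{\overline\nabla}U=0$, and proves Proposition~\ref{s2} by a different maximum argument: it works with $t=\tfrac12\mathrm{tr}\,U$, the traceless part $S=J(U-t\,\mathrm{Id})$, the frame $(T,JT)$ in the bundle of traceless symmetric endomorphisms and its connection $1$-form $\alpha$ satisfying $d\alpha=-2\,\mathrm{vol}$, and closes with a Laplacian inequality combined with a positivity lemma for products of symmetric matrices. Both are maximum-principle proofs of the same Liebmann-type rigidity; yours is the more elementary and classical one (essentially the argument of \cite{AAR} and of Liebmann's original proof), whereas the paper's is phrased intrinsically in terms of divergence-free tensor fields without ever choosing an eigenframe, which dovetails naturally with the paper's later observation that Proposition~\ref{s2} in fact implies Liebmann's sphere rigidity theorem.
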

\begin{proof}
We start with the case where $\xi$ is left-invariant. We have seen that $A$ must equal $f\xi\otimes\xi+B$ where the Killing vector field $\xi$ is an eigenvector of $A$, and $f$ and $B$ are $\xi$-invariant, hence they are pull-back of objects from the base $\SS^2(\frac{1}{2})$ of the Hopf fibration.
Define the open set $\cO=\lbrace x \in \SS^2, \, B(x) \neq -\Id \rbrace\subset \SS^2$. On $\mathcal O$, the first equation gives $f = 1$, and the second one gives $\det (B + 1) = 4$. By continuity, this is still true on $\overline{\mathcal O}$, hence $\overline\cO\subset\cO$. We conclude that $\cO$ is open and closed, and by connectedness it is either empty or equal to $\SS^2$.

If $\cO$ is empty, $B = - \mathrm{Id}$ on $\SS^2$, so $f=-1$ and the only solution of the system is $A = -\Id$. 

If $\cO=\SS^2$, we have seen that $f=1$ so we are left with the two equations $\det (B + 1) = 4$ and $\delta^{\overline \nabla} (B J) = 0$. Equivalently, we search for a symmetric endomorphism field $C=\frac{B+1}{2}$ on the unit sphere $\SS^2$ with $\det C = 1$ and $\delta^{\overline \nabla} (C J) =0$. 
Notice that $\delta^{\overline \nabla} (JCJ) = - J \delta^{\overline \nabla} (C J) = 0$, hence the symmetric endomorphism $U := JCJ$ satisfies 
$\det U = 1$ and $\delta^{\overline \nabla} U = 0$. 
The following proposition contains the result we need:

\begin{prop} \label{s2}
Let $U$ be a symmetric endomorphism on $\SS^2$ which satisfies $\det U = 1$ and $\delta^{\overline \nabla} U = 0$. Then $U = \pm \mathrm{Id}$.
\end{prop}
\begin{proof}
Let $t := \frac{1}{2} \mathrm{tr} U$, and $S := J (U - t \mathrm{Id})$. Since $U - t \mathrm{Id}$ is symmetric and traceless, so is $S$. We will use several times below that traceless symmetric endomorphisms anticommute with $J$. One has $\delta^{\overline \nabla} S = \delta^{\overline \nabla} (J U) - \delta^{\overline \nabla} (J t \mathrm{Id}) = J d t$. Since $U^2 - 2 t U + \det U = 0$ by Cayley-Hamilton's theorem, one has
\begin{equation} \label{eq555}
S^2 = (J S)^2 = (U - t Id)^2 = U^2 - 2 t U + t^2 \mathrm{Id} = (t^2 - 1) \mathrm{Id}.
\end{equation}
Thus, $\vert t \vert \ge 1$ because $S^2=SS^*$ is non-negative. The function $t$ is continuous on $\SS^2$, so it cannot change sign. It follows that either $t\geq 1$ on $\SS^2$ or $t\leq -1$ on $\SS^2$. We shall show below that in the first case $U=\Id$. In the second case, $-U$ also satisfies the hypotheses of the proposition and moreover $\tr(-U)>0$, so by the first case we get $-U=\Id$. It suffices therefore to solve the first case, i.e., we can assume in the rest of the proof that $t\geq 1$ on $\SS^2$.

We want to show that the open set $E := \lbrace x \in \SS^2; t(x) > 1 \rbrace$ is empty. One has $S \neq 0$ on $E$, thus we can define $T := \frac{S}{\Vert S \Vert}$. Since $S$ is symmetric and traceless, so are $T$ and $JT$, hence $(T, J T)$ is an orthonormal frame over $E$ in the bundle of symmetric traceless endomorphisms of $T\SS^2$. Therefore, 
\begin{equation} \label{defconnectionform}
\overline \nabla T =: \alpha \otimes J T
\end{equation}
for some $1$-form $\alpha \in \Lambda^1 (E)$. By taking a further covariant derivative in this equation and skew-symmetrizing one has for any $X, Y \in T \SS^2$
\[
d \alpha (X,Y) J T = \bar {\R}_{X,Y} T
\]
where $\bar \R$ is the curvature of $\SS^2$, which acts on any endomorphism field $W$ as $\bar \R_{X,Y} W = [\bar \R_{X,Y}, W]$. Then, using the identity $\bar \R_{X,Y} = - X \wedge Y$  on $\SS^2$, we obtain $d \alpha = - 2 vol$, where $vol$ is the Riemannian volume form.

We identify $\alpha$ with a vector field $\alpha^\sharp$ via the metric, and one has $\delta^{\overline \nabla} T = - J T \alpha^\sharp$ by Equation \eqref{defconnectionform}. Since $T^2 = (J T)^2 = \frac{1}{2} \mathrm{Id}$ we obtain $\alpha^\sharp = - 2 J T (\delta^{\overline \nabla} T)$. The condition $\delta^{\overline \nabla} S = J \dd t$ gives $\delta^{\overline \nabla} T = \frac{\delta^{\overline \nabla} S}{\Vert S \Vert} + T \grad \ln (\Vert S \Vert) = \frac{J dt}{\Vert S \Vert} + T \grad \ln (\Vert S \Vert)$, hence
\begin{equation*}
\alpha^\sharp = - \frac{2 S (\grad (t))}{\Vert S \Vert^2} - J \grad \ln (\Vert S \Vert).
\end{equation*}
We now differentiate this equation to obtain
\[
d^* J \alpha^\sharp = - d^* \left(\frac{2 J S (\grad (t))}{\Vert S \Vert^2} \right) + \Delta \ln (\Vert S \Vert)
\]
where $\Delta = d^* d$ is the positive Laplacian on $\SS^2$. Using the fact that $d \alpha  = - 2 V$, which is equivalent to $d^* J \alpha^\sharp = -2$, one has
\[
- 2 = - d^* \left(\frac{2 J S (\grad (t))}{\Vert S \Vert^2} \right) + \Delta \ln (\Vert S \Vert).
\]
We know from \eqref{eq555} that $\Vert S \Vert^2 = 2 (t^2 - 1)$, so $t  = \sqrt {\frac{1}{2}\Vert S \Vert^2 + 1}$. This leads to $d t = \frac{\Vert S \Vert d (\Vert S \Vert)}{2 \sqrt {\frac{1}{2}\Vert S \Vert^2 + 1}}$, and finally
\begin{equation*}
- 2 = - d^* \left(\frac{J S d \ln (\Vert S \Vert)}{\sqrt {\frac{1}{2}\Vert S \Vert^2 + 1}} \right) + \Delta \ln (\Vert S \Vert).
\end{equation*}

We define $\zeta := \frac{1}{2} \Vert S \Vert^2$ and we rewrite the above equation as
\begin{equation*}
- 1 = - d^* (\frac{J S d \ln \zeta}{\sqrt {\zeta + 1}}) + \Delta \ln \zeta.
\end{equation*}
Let $x \in \SS^2$ be a point where $t $, and thus $\Vert S \Vert$, reaches its maximum. Clearly, $x \in E$. Let $(e_1, e_2)$ be an orthonormal frame which is parallel at $x$. At this point, one has
\begin{align*}
d^* (\frac{J S d \ln \zeta}{\sqrt{\zeta + 1}}) &= - \langle J \overline \nabla_{e_1} (\frac{S d \ln \zeta}{\sqrt {\zeta + 1}}), e_1 \rangle - \langle J \overline \nabla_{e_2} (\frac{S d \ln \zeta}{\sqrt {\zeta + 1}}), e_2 \rangle \\
&= \frac{\left[ \langle \overline \nabla_{e_1} d \ln \zeta, S J e_1 \rangle + \langle \overline \nabla_{e_2} d \ln \zeta , S J e_2 \rangle \right]}{\sqrt {\zeta + 1}} \\
&= \frac{\langle \Hess \ln \zeta , S J \rangle}{\sqrt {\zeta + 1}},
\end{align*}
where the Hessian is defined for any function $\beta$ by $\Hess \beta := \overline \nabla d \beta$.
Thus,
\begin{align*}
- 1 &= - \frac{\langle \Hess \ln \zeta , S J \rangle}{\sqrt {\zeta + 1}} - \langle \Hess \ln \zeta, \mathrm{Id} \rangle \\
&= - \langle \Hess \ln \zeta, \mathrm{Id} + M \rangle,
\end{align*}
where $M := \frac{S J}{\sqrt {\zeta + 1}}$. Since $\zeta = \frac{1}{2} \Vert S \Vert^2$ one has
\begin{align*}
\Vert M \Vert =  \sqrt{\frac{2 \zeta}{\zeta + 1}} < \sqrt 2.
\end{align*}
Since the trace of $M$ vanishes, $\Vert M \Vert = \sqrt 2 \rho(M)$, where $\rho(M)$ is the spectral radius of $M$. Thus one has $\rho(M) < 1$, so $\mathrm{Id} + M$ is positive definite. We now use the following elementary result:

\begin{lemma}
Let $N_1, N_2 \in \mathcal S_n (\RR)$ be two symmetric matrices such that $N_1$ is positive and $N_2$ is non-positive. Then, $\langle N_1 N_2 x, x \rangle \le 0$ for all $x \in \RR^n$.
\end{lemma}
\begin{proof}[Proof of the lemma]
By working in a eigenbasis $(f_1, \ldots, f_n)$ of $N_1$, we can suppose that $N_1$ is diagonal. Thus, for any $j \in \{1, \ldots, n \}$ we have $N_1 f_j =: \lambda_j f_j$ and
\[
\langle N_1 N_2 f_j, f_j \rangle = \langle N_2 f_j, N_1 f_j \rangle = \lambda_j \langle N_2 f_j, f_j \rangle \le 0.  \qedhere
\]
\end{proof}

The matrix $\Hess \ln (\Vert S \Vert)$ is non-positive because we are at a maximum point, and $\mathrm{Id} + M$ is positive definite, so the previous lemma yields:
\[
- 1 = - \langle \Hess \ln \zeta, \mathrm{Id} + M \rangle \ge 0
\]
which is absurd. Thus $E = \emptyset$, so $t = 1$ on $\SS^2$, and hence $S = 0$ by \eqref{eq555}. Therefore, in the case $t>0$ we finally get $U =\Id$. 

The solution $U=-\Id$ is obtained in the case $t<0$, as explained in the beginning of the proof.
\end{proof}

Recall that in the case $B\neq -\Id$ we defined $U=C^{-1}$ where $C=\frac{B+\Id}{2}$.
As a consequence of Proposition~\ref{s2}, we get the additional solutions $B = - 3 \mathrm{Id}$ or $B = \mathrm{Id}$. We obtain therefore three solutions to equations \eqref{sysend}, which lead to the endomorphism fields $A = \pm \mathrm{Id}$ and $A = \xi \otimes \xi - 3 P$, where we recall that $P$ is the orthogonal projector on $\xi^\perp$.

The previous analysis adapts as usual in the case where $\xi$ is right-invariant, yielding the fourth solution $A = - \xi \otimes \xi + 3 P$. 
\end{proof}

\subsection{Link with the sphere rigidity theorem}
A classical result due to Liebmann \cite{L99} states that the only isometric immersions of the round sphere $\SS^2$ in $\RR^3$ are the totally umbilical embeddings (hence they differ from the standard embedding by an isometry of $\RR^3$). Let us recall a property of Codazzi tensors in dimension $2$:
\begin{lemma}
Let $\Sigma$ be a surface endowed with a Riemannian metric $h$ and $S$ a field of endomorphisms on $\Sigma$. Then $S$ is a Codazzi tensor (i.e., $d^\nabla S=0$) if and only if $JSJ$ is divergence-free, where $J$ is the Hodge star on $1$-forms.
\end{lemma}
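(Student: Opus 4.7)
The plan is to exploit two structural features of an oriented Riemannian surface: first, that the rotation by $\pi/2$ endomorphism $J$ of $T\Sigma$ (corresponding, via the metric, to the Hodge star on $1$-forms) is parallel with respect to the Levi-Civita connection; and second, that $TM$-valued $2$-forms are fibre-wise $1$-dimensional, so $d^\nabla S$ is completely determined by its value on a single positively oriented orthonormal pair. I will view $S$ as a $TM$-valued $1$-form, so that both $d^\nabla S$ and $\delta^\nabla(JSJ)$ are defined via the formulas already used in the proof of Theorem~\ref{deformation}.

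I would work locally in an oriented orthonormal frame $(X_1,X_2)$ with $JX_1=X_2$ and $JX_2=-X_1$. The key computation is the identity
\begin{equation*}
(\nabla_Y(JSJ))(X)=J\bigl((\nabla_Y S)(JX)\bigr),\qquad X,Y\in T\Sigma,
\end{equation*}
which follows from $\nabla J=0$ by a direct application of the Leibniz rule to $Y\mapsto J(S(J(\cdot)))$. Plugging this into the definition of the divergence and summing over $k=1,2$ yields
\begin{equation*}
\delta^\nabla(JSJ)=-\sum_{k=1}^{2}(\nabla_{X_k}(JSJ))(X_k)=-J\bigl((\nabla_{X_1}S)(X_2)-(\nabla_{X_2}S)(X_1)\bigr)=-J\cdot d^\nabla S(X_1,X_2).
\end{equation*}

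Since $J$ is a linear isomorphism, this last expression vanishes if and only if $d^\nabla S(X_1,X_2)=0$. On a $2$-dimensional manifold, however, a $TM$-valued $2$-form is uniquely determined by its value on any oriented orthonormal pair, so the latter condition is equivalent to $d^\nabla S=0$ globally. This proves the two implications simultaneously. Notice that symmetry of $S$ plays no role, so the conclusion is valid for every endomorphism field.

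There is no substantial obstacle in this argument: everything reduces to the parallelism $\nabla J=0$ on an oriented Riemannian surface, together with the dimension count that collapses $TM$-valued $2$-forms to something essentially scalar. The only point one must be careful about is keeping the sign convention for $\delta^\nabla$ consistent with the one introduced earlier in the paper, since the identification $d^\nabla S=0\iff\delta^\nabla(JSJ)=0$ is insensitive to an overall sign but the intermediate formula is not.
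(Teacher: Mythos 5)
Your proof is correct and follows essentially the same route as the paper's: both compute $\delta^\nabla(JSJ)$ in a local frame $(X,JX)$, use $\nabla J=0$ to pull $J$ outside the covariant derivatives, and arrive at the identity $\delta^\nabla(JSJ)=-J\,d^\nabla S(X,JX)$, from which the equivalence is immediate.
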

\begin{proof}
Let $X$ be a locally-defined unit vector field on $\Sigma$. Using $\nabla J=0$ we compute
\begin{align*}
-\delta^\nabla (JSJ) =& \nabla_X(JSJ)(X)+\nabla_{JX}(JSJ)(JX)\\
=&J\nabla_X(S)(JX)+J\nabla_{JX}(S)(JJX)\\
=&Jd^\nabla(S)(X,JX).\qedhere
\end{align*}
\end{proof}
Proposition \ref{s2} implies the following slight extension of the sphere rigidity theorem:
\begin{prop} 
Every isometric immersion of the round $2$-sphere in a flat $3$-manifold is totally umbilical. 
\end{prop}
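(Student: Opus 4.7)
The plan is to reduce the rigidity statement to Proposition~\ref{s2} via the Gauss--Codazzi equations of the immersion. Let $\iota : \SS^2 \to N$ be an isometric immersion into a flat $3$-manifold $N$, and let $S : T\SS^2 \to T\SS^2$ denote its shape operator. After rescaling, we can assume that $\SS^2$ is the unit sphere, so its sectional curvature equals $1$.

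The first step is to translate the flatness of $N$ into pointwise constraints on $S$. Since $N$ is flat, the Gauss equation reduces to $\det S = K_{\SS^2} = 1$, and the Codazzi equation reduces to $d^{\overline\nabla} S = 0$. Thus $S$ is a symmetric Codazzi tensor of determinant $1$.

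The second step is to apply the lemma proved just above: the Codazzi condition on $S$ is equivalent to $\delta^{\overline\nabla}(JSJ) = 0$. Setting $U := JSJ$, we obtain a symmetric, divergence-free endomorphism field on $\SS^2$. Since $J$ is an isometry of each tangent plane we have $\det U = \det S = 1$, so $U$ satisfies the hypotheses of Proposition~\ref{s2}. We conclude that $U = \pm \Id$. Using $J^2 = -\Id$, the equation $U = JSJ$ can be inverted as $S = JUJ$, which gives $S = \mp \Id$. Hence $S$ is pointwise a scalar multiple of the identity, i.e.\ the immersion is totally umbilical.

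There is no real obstacle in this argument: all the analytic work has been packaged into Proposition~\ref{s2}, and the present statement follows from it by an essentially formal manipulation using the Gauss equation, the Codazzi equation, and the preceding lemma. The only point to watch is the orientation convention implicit in the definitions of $J$ and of the shape operator, but either sign choice leads to $S = \pm \Id$ and therefore to the same umbilicity conclusion.
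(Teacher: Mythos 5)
Your argument is correct and is essentially identical to the paper's: both reduce the statement via the Gauss equation ($\det S = 1$) and the Codazzi equation ($d^{\overline\nabla}S = 0$), invoke the preceding lemma to convert the Codazzi condition into $\delta^{\overline\nabla}(JSJ)=0$, and then apply Proposition~\ref{s2} to conclude $JSJ=\pm\Id$ and hence $S=\mp\Id$.
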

Indeed, the Gauss and Codazzi equations of the embedding $\SS^2\hookrightarrow M$ with second fundamental form $S$ tell us that $\det(S)=1$ and $d^\nabla S=0$. By the above lemma, this is equivalent to $\det(JSJ)=1$ and $\delta^\nabla (JSJ)=0$, so by Proposition \ref{s2} we deduce that
$JSJ=\pm\Id$, which means that $S$ itself is $\pm\Id$.

We refer to \cite{AAR} for a modern proof of Liebmann's theorem using the curvature of the metric defined by the second fundamental form $S$.

The interplay between solving the system \eqref{sysend} and a nontrivial classical result might explain why the more general system \eqref{sd} is not so easy to solve.

\renewcommand{\refname}{\bf References}

\end{document}